\documentclass[11pt]{amsart}
\usepackage{calc,amssymb,amsmath,ulem,amsfonts,mathrsfs}
\usepackage{alltt}
\RequirePackage[dvipsnames,usenames]{color}

\normalem
\input{kmacros3.sty}
\input{xy}
\xyoption{all}
\usepackage{hyperref}
\usepackage{graphicx}
\usepackage[all,cmtip]{xy}
\usepackage{verbatim}
\usepackage[left=1.25in,top=1.25in,right=1.25in,bottom=1.25in]{geometry}
\usepackage{mabliautoref}
\usepackage{mathrsfs}

\renewcommand{\frm}{\mathfrak{m}}
\renewcommand{\m}{\mathfrak{m}}
\newcommand{\idealb}{\mathfrak{b}}

\newcommand{\icolon}[2]{ \left({#2}:{#1} \right)}
\newcommand{\NN}{\mathbb{N}}

\newcommand{\gs}{\geqslant}
\newcommand{\RR}{\mathbb{R}}
\newcommand{\ds}{\displaystyle}
\newcommand{\coeff}{\mathrm{coeff}}

\makeatletter
\let\@wraptoccontribs\wraptoccontribs
\makeatother

\maketheorem{setup}{Setup}{theorem}

\begin{document}
\numberwithin{equation}{theorem}

\title[Behavior at the $F$-pure threshold]{On the behavior of singularities at the $F$-pure threshold}


\thanks{Daniel Hern\'andez was supported in part by the NSF Postdoctoral Fellowship \#1304250}
\thanks{Karl Schwede was supported in part by NSF FRG Grant DMS \#1265261/1501115, NSF CAREER Grant DMS \#1252860/1501102 and a Sloan Fellowship.}
\thanks{Alessandro De Stefani was supported in part by NSF Grant DMS \#1259142.}
\thanks{Robert Walker was supported in part by NSF GRF under \#PGF-031543 and by NSF RTG grant
\#0943832.}
\thanks{Emily Witt was supported in part by NSF Grant DMS \#1501404.}

\begin{abstract}
We provide a family of examples where the $F$-pure threshold and the log canonical threshold of a polynomial are different, but where $p$ does not divide the denominator of the $F$-pure threshold (compare with an example of \mustata-Takagi-Watanabe).  We then study the $F$-signature function in the case where either the $F$-pure threshold and log canonical threshold coincide or where $p$ does not divide the denominator of the $F$-pure threshold.  We show that the $F$-signature function behaves similarly in those two cases.  Finally, we include an appendix which shows that the test ideal can still behave in surprising ways even when the $F$-pure threshold and log canonical threshold coincide.
 \end{abstract}

\author[E.~Canton]{Eric Canton}
\address{Department of Mathematics,
University of Nebraska--Lincoln,
203 Avery Hall,
Lincoln, NE  68588}
\email{ecanton2@math.unl.edu}

\author[D.~Hern\'andez]{Daniel J. Hern\'andez}
\address{Department of Mathematics,
University of Utah,
155 S 1400 E,
Salt Lake City, Utah 84112}
\email{dhernan@math.utah.edu}

\author[K.~Schwede]{Karl~Schwede}
\address{Department of Mathematics,
University of Utah,
155 S 1400 E,
Salt Lake City, Utah 84112}
\email{schwede@math.utah.edu}

\author[E.~Witt]{Emily E.~Witt}
\address{Department of Mathematics,
University of Utah,
155 S 1400 E,
Salt Lake City, Utah 84112}
\email{witt@math.utah.edu}

\contrib[With an appendix by]{Alessandro De Stefani}
\address{P. O. Box 400137,
Dept. of Mathematics,
University of Virginia,
319 Kerchof Hall,
Charlottesville, VA 22904}
\email{ad9fa@virginia.edu}

\contrib[]{Jack Jeffries}
\address{
2844 East Hall,
Department of Mathematics,
University of Michigan,
530 Church Street,
Ann Arbor, MI 48109}
\email{jeffries@math.utah.edu}

\contrib[]{Zhibek Kadyrsizova}
\address{
2844 East Hall,
Department of Mathematics,
University of Michigan,
530 Church Street,
Ann Arbor, MI 48109}
\email{zhikadyr@umich.edu}

\contrib[]{Robert Walker}
\address{
2844 East Hall,
Department of Mathematics,
University of Michigan,
530 Church Street,
Ann Arbor, MI 48109}
\email{robmarsw@umich.edu}

\contrib[]{George Whelan}
\address{
Department of Mathematical Sciences,
4400 University Drive, MS3 F2,
George Mason University,
Fairfax, Virginia, 22030
}
\email{gwhelan@masonlive.gmu.edu}

\subjclass[2010]{13A35, 14J17, 14B05}
\keywords{$F$-pure threshold, log canonical threshold}

\maketitle

\section{Introduction}


Inspired by connections between singularities of the minimal model program and those from tight closure theory, S.\;Takagi and K.\,i.\;Watanabe introduced the \emph{$F$-pure threshold} \cite{TakagiWatanabeFPureThresh}; see also \cite{MustataTakagiWatanabeFThresholdsAndBernsteinSato}.
If $f$ is a nonzero element in a Noetherian ring $R$ of prime characteristic, then the $F$-pure threshold, denoted $\fpt(f)$, is the largest positive real number $t$ for which the pair $(R, f^t)$ is $F$-pure. 
This number has been shown to be rational in several contexts; for regular rings, the focus of this paper, rationality is proven in \cite{BlickleMustataSmithFThresholdsOfHypersurfaces}.

The $F$-pure threshold is closely related to the \emph{log canonical threshold}, an important measure of the singularities that
has appeared in several guises \cite[Sections 8-10]{KollarSingularitiesOfPairs}.  The log canonical threshold of an element $f$ is denoted $\lct(f)$, and though this invariant is often only considered when the characteristic of the ambient space is zero, it is, in fact, defined in all characteristics.
Moreover, if $R$ is a polynomial ring over $\bQ$, and $f_p$ is the reduction of $f \in R$ modulo $p$ \cite{HochsterHunekeTightClosureInEqualCharactersticZero},
then \cite{HaraWatanabeFRegFPure} and \cite[Corollary 4.1]{ZhuLCTPosChar}
imply that
\begin{equation} \label{inequalities: e}
\fpt(f_p) \leq \lct(f_p) \leq \lct(f).
\end{equation}
Furthermore, standard reduction to characteristic $p > 0$ techniques enable one to show that $\lct(f_p) = \lct(f)$ for $p \gg 0$ \cite{HaraYoshidaGeneralizationOfTightClosure}.


The values in \eqref{inequalities: e} coincide for $f = y^2-x^3 \in \bC[x,y]$ when $p \equiv 1 \bmod 6$ in which case $\fpt(f_p) = \lct(f_p) = \lct(f) = {5 \over 6}$, but $\fpt(f_p) = {5 \over 6} - {1 \over 6p}$ if $p \equiv 5 \bmod 6$.  This type of behavior seems common for many singularities.  In fact,
it is conjectured that there always exists a Zariski-dense set of primes $p$ for which \eqref{inequalities: e} consists of equalities \cite{MustataSrinivasOrdinary}, and this motivates understanding the $F$-pure threshold when these numbers do \emph{not} coincide.

In numerous examples, including the cusp $f = y^2-x^3$ above, it has been noted that when $\fpt(f_p) \neq \lct(f_p)$,
it is frequently the case that $p$ divides the denominator of $\fpt(f_p)$.
It was even asked if this was always the case (including by the third author of this paper).
There is one example in the literature of a polynomial $f$ for which certain reductions $f_p$ satisfy $\fpt(f_p) \neq \lct(f_p)$, but $p$ does not divide the denominator of $\fpt(f_p)$; see \cite[Example 4.5]{MustataTakagiWatanabeFThresholdsAndBernsteinSato}.  This example, however, is not as widely known as it should be.

On the other hand, in many cases, $p$ must divide the denominator of $\fpt(f_p)$ whenever
$\fpt(f_p) \neq \lct(f_p)$.  More precisely, this occurs for diagonal polynomials \cite{HernandezFInvariantsOfDiagonalHyp}, binomials \cite{HernandezFPureThresholdOfBinomial}, homogeneous polynomials with an isolated singularity \cite{BhattSinghCalabiYau, HernandezNunezWittZhangHomogeneous}, and all homogeneous polynomials in two variables \cite{HernandezTeixeiraTwoVariable}.  We begin by providing a new family of examples for which $\fpt(f_p) \neq \lct(f_p)$, but $p$ does not divide the denominator of $\fpt(f_p)$.

\begin{theoremA*}[\autoref{prop.PNotDividingDenominatorExample}, \autoref{cor.LCTComputed}]
Fix a prime $p > 2$ and an $F$-finite field $k$ containing $\bF_p$.  Suppose that $d \geq n \geq 3$, $d > 3$, and  $p \nmid d (n(d-2) - d)$.  If
\[f = x_1^d + \cdots + x_n^d + (x_1 \cdots x_n)^{d-2} \in R = k[x_1, \ldots, x_n],\] then $\lct(f) = {n  \over d}$.
If, in addition, $p \equiv -1 \bmod d$, then $\fpt(f) = {n (p-d+1)+d \over d(p-1)}$.

In particular, there exist infinitely many primes $p$ for which $\fpt(f) \neq \lct(f)$, yet $p$ does not divide the denominator of $\fpt(f)$.
\end{theoremA*}

\begin{remark}
A forthcoming paper of the second and fourth authors will explore large classes of polynomials for which $\fpt(f) \neq \lct(f)$, but where $p$ does not divide the denominator of the $F$-pure threshold.
\end{remark}

If $p$ does not divide the denominator of $\fpt(f)$, the $F$-singularities of the pair $(R, f^{\fpt(f)})$ are similar to the $F$-singularities of $F$-pure rings.
For example, in both cases, the test ideal is reduced and cuts out an $F$-pure scheme \cite{VassilevTestIdeals,SchwedeSharpTestElements}.
On the other hand, if $p$ divides the denominator of the $F$-pure threshold, then the test ideal of the pair need not even be reduced \cite{MustataYoshidaTestIdealVsMultiplierIdeals}.  Thus, when searching for conditions that guarantee that the pair $(R, f^{\fpt(f)})$ is ``well behaved,'' there are at least two clear candidates:
\begin{enumerate}
\item  The characteristic does not divide the denominator of the $F$-pure threshold.
\label{cond1}
\item The $F$-pure threshold and log canonical threshold coincide.
\label{cond2}
\end{enumerate}
The example \cite[Example 4.5]{MustataTakagiWatanabeFThresholdsAndBernsteinSato} and our Theorem A shows that these are distinct conditions, and it is natural to ask whether there are other conditions that either imply or are implied by \eqref{cond1} and/or \eqref{cond2}.


Toward this end, we shift our focus toward the \emph{$F$-signature function},
which asymptotically counts certain Frobenius splittings associated
to a pair $(R,f)$.   It is important to recall that if $R$ is an $F$-finite local
ring, then this function is continuous and convex, so that
one-sided derivatives exist at all points
\cite[Theorems 3.2, 3.5]{BlickleSchwedeTuckerTestIdeals2}.  Furthermore, these
derivatives encode other important numerical invariants; e.g., if $R$ is a domain,
then the negative of the right derivative at zero is the Hilbert-Kunz
multiplicity of $R/f$, while the negative of the left derivative at one is the
(traditional) $F$-signature of $R/f$ \cite[Theorem 4.4]{BlickleSchwedeTuckerTestIdeals2}.
Motivated by this, and the fact the $F$-signature function is supported on the interval
$[0, \fpt(f)]$, it is therefore natural to consider the left derivative of the
$F$-signature function at the $F$-pure threshold.  We show that
either of the conditions \eqref{cond1} or \eqref{cond2} imply similar behavior of
the $F$-signature function at the $F$-pure threshold.

\begin{samepage}
\begin{theoremB*}[\autoref{thm.LeftDerivativeOfFSig},\autoref{thm.FsigDerivativeNonzeroIfFPT=LCT}]
 Suppose that $f$ is a square-free element of an $F$-finite regular local ring $R$ of characteristic $p>0$.
Suppose further that the $F$-pure threshold of $f$ is less than one, and one of the following two conditions holds:
\begin{itemize}
\item[(1)]  $p$ does not divide the denominator of the $F$-pure threshold, or
\item[(2)]  $\fpt(f) = \lct(f)$ and there exists a divisor $E$ on some birational model such that the discrepancy of $(R, f^{\lct(f)})$ is $-1$ along $E$.\footnote{Such an $E$ always exists assuming the existence of resolution of singularities.  }
\end{itemize}
Then the left derivative of the $F$-signature function associated to $(R,f)$ at the $F$-pure threshold of $f$ equals zero.
\end{theoremB*}
\end{samepage}

\noindent Note that the portion of this result showing that the derivative vanishes under condition \eqref{cond1} originally appeared in an unpublished preprint of the first author \cite{CantonLeftDerivativeOfFSignature}.

Finally, an appendix written by a separate set of authors is included.
The results therein demonstrate another way that conditions \eqref{cond1} and \eqref{cond2} differ.
In particular, the appendix provides an example in which 
condition (b) is satisfied, and hence the left derivative of the $F$-signature function is zero, but the test ideal is not radical.  Note that the test ideal is always radical under condition (a) \cite{FedderWatanabe,VassilevTestIdeals,SchwedeSharpTestElements}.

\vskip 6pt

\noindent\emph{Acknowledgements:}  The authors of this paper would like to thank Paolo Cascini, Mircea \mustata, Karen Smith, and Kevin Tucker for valuable conversations.  We would like to thank Bernd Schober and Susan M\"uller for pointing out typos in a previous draft of this paper.   We would also like to especially thank Shunsuke Takagi for pointing out \cite[Example 4.5]{MustataTakagiWatanabeFThresholdsAndBernsteinSato}.

\section{A family of examples}

The $F$-pure threshold of an element $f$ of an $F$-finite regular ring $R$ may be described as the supremum over all positive real parameters $\lambda$ such that $(R, f^{\lambda})$ is sharply $F$-pure, or equivalently, as the supremum over all $\lambda > 0$ such that $\tau(R, f^{\lambda}) = R$.  This invariant is always a positive rational number in the unit interval \cite{BlickleMustataSmithDiscretenessAndRationalityOfFThresholds}.

For the convenience of the reader, we review these notions below in a particularly interesting (and simple) setting:  Suppose that $(R, \m)$ is an $F$-finite regular local ring, and that $\lambda$ is a positive rational number whose denominator (in lowest terms) is not divisible by $p$.  In other words, $\lambda = a/(q-1)$, where $q$ is a power of $p$ and $a$ is some positive integer.  In this context, $(R, f^{\lambda})$ is sharply $F$-pure whenever $f^a \notin \m^{[q]}$  \cite[Corollary 3.4]{SchwedeSharpTestElements}, and $\tau(R, f^{\lambda})$ is the minimal ideal $\idealb$ of $R$ (with respect to inclusion) with $f^a \in (\idealb^{[q]} : \idealb)$ \cite[Theorem 6.3]{SchwedeCentersOfFPurity}.  It is important to note that both of these notions depend only on the parameter $\lambda$, and not on the particular representation $\lambda = a/(q-1)$.   Finally, we recall that $\tau(R, f^{\fpt(f)})$ is a proper ideal containing $f$, and that $(R, f^{\fpt(f)})$ is sharply $F$-pure if and only of the denominator of $\fpt(f)$ is not divisible by $p$ (see, e.g., \cite[Remark 5.5]{SchwedeSharpTestElements} or \cite[Theorem 4.1]{HernandezFPurityOfHypersurfaces}).

\subsection{Some characterizations}

Below,  we characterize when the $F$-pure threshold of a hypersurface in an $F$-finite regular local ring has a certain special form.

\begin{setup}
\label{characterization: S}  Suppose that $f$ is an element of an $F$-finite regular local ring $(R, \m)$, and that $\lambda = a/(q-1)$, where $q$ is a power of $p$ and $a$ is some positive integer.
\end{setup}

The following definition is \emph{not} required to understand most of the results in this paper (especially when working over a regular ambient ring).  However, some find this framework convenient to work with.  We will point out at various points how this language implies things we also prove directly.

\begin{definition}[Uniformly $F$-compatible ideals]
An ideal $I$ of $R$ (which is not necessarily regular) is {\em uniformly $(f^t, F)$-compatible} if for every $R$-linear map
\[ \phi: F^e_*R \to R, \]
we have that $\phi(f^{\lceil t(p^e-1)\rceil}I) \subseteq I$.
\end{definition}

When $R$ is regular and local,
this is equivalent to the requirement that \mbox{$f^{\lceil t(p^e-1) \rceil} \in (I^{[p^e]}:I)$} for all $e \ge 0$
\cite[Proposition 3.11]{SchwedeCentersOfFPurity}.
For $\lambda = a/(q-1)$, to check that $I$ is uniformly $(f^\lambda, F)$-compatible, it suffices to
verify that $f^a \in (I^{[q]}:I)$. Therefore, the test ideal
$\tau(R, f^\lambda)$ is the unique smallest uniformly $(f^\lambda, F)$-compatible ideal
of $R$. If a pair $(R, f^t)$ is sharply $F$-pure, then every uniformly $(f^t, F)$-compatible ideal is
radical \cite[Corollary 3.3]{SchwedeCentersOfFPurity}.

\begin{proposition}
\label{prop.characterization}
In the context of \autoref{characterization: S}, we have that $\fpt(f) = \lambda = a/(q-1)$ if and only if there exists a proper ideal $\idealb$ of $R$ 
such that $f^a  \in \icolon{\idealb}{\idealb^{[q]}} \setminus \m^{[q]}$.
In this case, $f \in \idealb$.
\end{proposition}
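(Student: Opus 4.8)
The proposition has two parts: the ``if and only if'' characterization of when $\fpt(f) = \lambda = a/(q-1)$, and the supplementary assertion that in this case $f \in \idealb$. I would prove the biconditional first, using the characterizations of sharp $F$-purity and the test ideal recalled just above the statement, and then deduce the containment $f \in \idealb$ at the end.

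\textbf{The forward direction.} Suppose $\fpt(f) = \lambda = a/(q-1)$. Since the denominator of $\lambda$ is not divisible by $p$, the pair $(R, f^{\lambda})$ is sharply $F$-pure by the criterion recalled in the paragraph beginning ``For the convenience of the reader''; concretely, this means $f^a \notin \m^{[q]}$ (this is \cite[Corollary 3.4]{SchwedeSharpTestElements}). Moreover, because $\fpt(f) = \lambda$, the test ideal $\tau(R, f^{\lambda})$ is a \emph{proper} ideal of $R$ (for $\mu < \lambda$ it would be all of $R$, but at $\lambda$ itself it drops; this is the content of the remark that $\tau(R, f^{\fpt(f)})$ is proper). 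Set $\idealb := \tau(R, f^{\lambda})$. By \cite[Theorem 6.3]{SchwedeCentersOfFPurity}, $\idealb$ is the minimal ideal with $f^a \in (\idealb^{[q]} : \idealb)$, so in particular $f^a \in \icolon{\idealb}{\idealb^{[q]}}$; and $f^a \notin \m^{[q]}$ from sharp $F$-purity. Thus $\idealb$ is the desired proper ideal.

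\textbf{The reverse direction.} Conversely, suppose such a proper ideal $\idealb$ exists, with $f^a \in \icolon{\idealb}{\idealb^{[q]}}$ and $f^a \notin \m^{[q]}$. Since $f^a \notin \m^{[q]}$, the pair $(R, f^{\lambda})$ is sharply $F$-pure, so $\fpt(f) \geq \lambda$. For the reverse inequality: the condition $f^a \in (\idealb^{[q]}:\idealb)$ says exactly that $\idealb$ is uniformly $(f^{\lambda}, F)$-compatible (this is the remark ``to check that $I$ is uniformly $(f^\lambda, F)$-compatible, it suffices to verify that $f^a \in (I^{[q]}:I)$''). Hence $\tau(R, f^{\lambda}) \subseteq \idealb$, since the test ideal is the \emph{smallest} uniformly $(f^{\lambda}, F)$-compatible ideal; because $\idealb$ is proper, so is $\tau(R, f^{\lambda})$. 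A proper test ideal forces $\fpt(f) \leq \lambda$: indeed if $\fpt(f) > \lambda$ then $\tau(R, f^{\lambda}) = R$. Combining, $\fpt(f) = \lambda$, as claimed. The main subtlety to get right here is the precise meaning of ``$\fpt(f) = \lambda$'' via test ideals versus via sharp $F$-purity, and making sure the two halves of the argument (the lower bound from $f^a \notin \m^{[q]}$, the upper bound from properness of the test ideal) are cleanly separated; this is where I expect the bookkeeping to require the most care.

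\textbf{The supplementary claim $f \in \idealb$.} Now assume $\fpt(f) = \lambda$ and let $\idealb$ be any proper ideal as in the statement. Since $\tau(R, f^{\lambda}) \subseteq \idealb$ by the argument above, it suffices to show $f \in \tau(R, f^{\lambda})$; this is part of the recalled fact that $\tau(R, f^{\fpt(f)})$ is a proper ideal \emph{containing $f$}. Alternatively, one can argue directly: from $f^a \in (\idealb^{[q]}:\idealb)$, multiplying by $f^{q-1-a}$ (legitimate since $a \leq q-1$, as $\lambda \leq 1$) gives $f^{q-1}\idealb \subseteq \idealb^{[q]}$, and one checks that this forces $f \in \idealb$ by a standard argument using that $R$ is regular and $\idealb$ is $(f^{q-1}, F)$-compatible, so that $\sqrt{\idealb}$ contains $f$, combined with an iteration/Briançon--Skoda-type estimate; but invoking the recalled statement about $\tau(R, f^{\fpt(f)})$ is cleanest. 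Either way, the hard part is entirely in the biconditional; the containment $f \in \idealb$ is then a short consequence.
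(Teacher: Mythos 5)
Your proposal is correct and follows essentially the same route as the paper: take $\idealb = \tau(R,f^{\lambda})$ in the forward direction, and in the reverse direction combine sharp $F$-purity (from $f^a\notin\m^{[q]}$) with the minimality of the test ideal inside the given proper $\idealb$. For the final containment the paper uses monotonicity of test ideals ($\langle f\rangle = \tau(R,f^{1})\subseteq\tau(R,f^{\lambda})\subseteq\idealb$, after noting $\lambda\le 1$), which is just the proof of the recalled fact you invoke, so the arguments coincide in substance.
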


\begin{proof}
First, suppose that $\fpt(f) = \lambda$.  The form of  $\lambda$ implies that $(R, f^{\lambda})$ is sharply $F$-pure, so that $f^a \notin \m^{[q]}$, and we may then set $\idealb = \tau(R, f^{\lambda})$.  Next, suppose that $f^a \notin \m^{[q]}$, and that $f^a \in \icolon{\idealb}{\idealb^{[q]}}$ for some proper ideal $\idealb$.  This first condition implies that $(R, f^{\lambda})$ is sharply $F$-pure, and so $\fpt(f)\geq \lambda$, while the second condition, and the minimality of the test ideal, shows that $\tau(R, f^{\lambda})$ is contained in $\idealb$, and is therefore a  proper ideal.  The characterization of the $F$-pure threshold via test ideals then  shows that $\fpt(f)\leq \lambda$.  It remains to show that $f \in \idealb$.  However, the assumption that $f^a \notin \m^{[q]}$ implies that $\lambda \leq 1$, and as test ideals decrease as the parameter increases, we have that
$\langle f \rangle  = \tau(R, f^{1}) \subseteq \tau(R, f^{\lambda}) \subseteq \idealb$.
\end{proof}

One can also prove the above result using the language of uniformly $F$-compatible ideals.  In the $(\Leftarrow)$ direction, the point is that the $\idealb$ is uniformly $(f^{\lambda}, F)$-compatible and hence contains the test ideal.


%


Next, we obtain a refined statement in the case $f$ has an isolated singularity.
\begin{lemma}
\label{lem.SpeciaFedderfptCriterion}  In the context of \autoref{characterization: S}, if $x_1, \ldots, x_n$ is a system of parameters for $R$ and $\sqrt{\tau(R, f^{\lambda})} = \m$ (e.g., if the hypersurface defined by $f$ has an isolated singularity at $\m$), then $\fpt(f) = \lambda$ if and only if $f^a \equiv u (x_1 \cdots x_n)^{q-1} \bmod \m^{[q]}$ for some unit $u \in R$.
\end{lemma}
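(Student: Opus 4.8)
The plan is to recognize the statement as \autoref{prop.characterization} specialized to the ideal $\idealb=\m$, with the hypothesis $\sqrt{\tau(R,f^{\lambda})}=\m$ supplying exactly the input needed to pin $\idealb$ down. Two standard facts will be used: first, that $x_1,\dots,x_n$ generate $\m$, so $\m^{[q]}=(x_1^q,\dots,x_n^q)$; and second, that $R/\m^{[q]}=R/(x_1^q,\dots,x_n^q)$ is a zero-dimensional complete intersection, hence Gorenstein with one-dimensional socle spanned by the image of $(x_1\cdots x_n)^{q-1}$. Equivalently, $\icolon{\m}{\m^{[q]}}=\m^{[q]}+(x_1\cdots x_n)^{q-1}R$ and $(x_1\cdots x_n)^{q-1}\notin\m^{[q]}$; note also that $(x_1\cdots x_n)^{q-1}\in\icolon{\m}{\m^{[q]}}$ is immediate from $x_i(x_1\cdots x_n)^{q-1}=x_i^q\prod_{j\neq i}x_j^{q-1}\in\m^{[q]}$.

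For $(\Leftarrow)$ I would assume $f^a\equiv u(x_1\cdots x_n)^{q-1}\bmod\m^{[q]}$ for a unit $u$. Then $f^a\notin\m^{[q]}$ because $(x_1\cdots x_n)^{q-1}\notin\m^{[q]}$, and $f^a\in\icolon{\m}{\m^{[q]}}$ because $(x_1\cdots x_n)^{q-1}\in\icolon{\m}{\m^{[q]}}$; hence $f^a\in\icolon{\m}{\m^{[q]}}\setminus\m^{[q]}$, and \autoref{prop.characterization} applied with the proper ideal $\idealb=\m$ yields $\fpt(f)=\lambda$. This half uses neither the hypothesis on $\sqrt{\tau(R,f^{\lambda})}$ nor any property of $R$ beyond $\m=(x_1,\dots,x_n)$.

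For $(\Rightarrow)$ I would assume $\fpt(f)=\lambda=a/(q-1)$. Because $p\nmid(q-1)$, the pair $(R,f^{\lambda})$ is sharply $F$-pure and $\tau(R,f^{\lambda})$ is a proper ideal, as recalled earlier; hence by \cite[Corollary 3.3]{SchwedeCentersOfFPurity} every uniformly $(f^{\lambda},F)$-compatible ideal, in particular $\tau(R,f^{\lambda})$, is radical, and the hypothesis $\sqrt{\tau(R,f^{\lambda})}=\m$ then forces $\tau(R,f^{\lambda})=\m$. Applying \autoref{prop.characterization} with $\idealb=\tau(R,f^{\lambda})=\m$ gives $f^a\in\icolon{\m}{\m^{[q]}}\setminus\m^{[q]}$, so by the socle description I may write $f^a=u(x_1\cdots x_n)^{q-1}+m$ with $m\in\m^{[q]}$; if $u\in\m$ then $u(x_1\cdots x_n)^{q-1}\in\m\cdot\icolon{\m}{\m^{[q]}}\subseteq\m^{[q]}$ would force $f^a\in\m^{[q]}$, a contradiction, so $u$ is a unit and the desired congruence holds. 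I do not expect a genuine obstacle here: beyond the standard socle computation for $R/\m^{[q]}$, the argument only recombines \autoref{prop.characterization} with the radicality of uniformly compatible ideals of sharply $F$-pure pairs — the one place where care is needed is using that radicality, together with the isolated-singularity hypothesis, to replace the a priori unknown ideal of \autoref{prop.characterization} by $\m$.
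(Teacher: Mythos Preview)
Your proof is correct and follows essentially the same route as the paper: both directions reduce to \autoref{prop.characterization} with $\idealb=\m$, the forward direction uses sharp $F$-purity to force $\tau(R,f^{\lambda})=\sqrt{\tau(R,f^{\lambda})}=\m$, and the congruence with a unit is extracted from the socle description of $R/\m^{[q]}$. You have spelled out the socle computation and the ``$u$ is a unit'' step more carefully than the paper, but the argument is the same.
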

\begin{proof}
If $f^a$ satisfies the desired congruence modulo $\m^{[q]}$, then one may take $\idealb = \m$
in \autoref{prop.characterization}.  Next, assume that $\fpt(f) = \lambda$. Since $(R, f^\lambda)$
is sharply $F$-pure, $\tau = \tau(R, f^\lambda) = \sqrt{\tau} = \m$ and $f^a \in (x_1\cdots x_n)^{q-1}R
\setminus \m^{[q]}$ so $f^a \equiv u(x_1 \cdots x_n)^{q-1} \bmod \m^{[q]}$.
\end{proof}

\subsection{Some computations}

\begin{setup}
\label{computation: s}
  Fix integers $d$ and $n$ satisfying $d \geq n \geq 4$ or $d > n = 3$, and  \[p \nmid d  ( n(d-2) - d). \]
We also fix a perfect field $k$ of characteristic $p>0$, and set \[ f = x_1^d + \cdots + x_n^d + (x_1 \cdots x_n)^{d-2} \in S =  k[x_1, \cdots, x_n].\]
Finally, we use $R$ to denote the localization of $S$ at $\m =  \langle x_1, \cdots, x_n \rangle \subseteq S$.
\end{setup}

\begin{remark}
\label{isolatedSing: R}
In the context of \autoref{computation: s}, the identities
\[ d(n(d-2)-d) x_i^d = d(d-2)  f + (n(d-2) -2d + 2) x_i \frac{\partial f}{\partial x_i} - (d-2) \sum_{j \neq i } x_j \frac{\partial f}{\partial x_j} \]
and our assumption on $p$ shows that $\m = \sqrt{(f, \frac{\partial f}{\partial x_1}, \cdots,  \frac{\partial f}{\partial x_n})}$.  Consequently, the hypersurface defined by $f$ has an isolated singularity at the origin, so that $\fpt(S, f) = \fpt(R, f)$.
\end{remark}

\begin{proposition}
\label{prop.PNotDividingDenominatorExample}
In the context of \autoref{computation: s}, if $p \equiv -1 \bmod d$, then $\fpt(S,f) = {n (p-d+1)+d \over d(p-1) }$.
\end{proposition}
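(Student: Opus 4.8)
The strategy is to verify the congruence criterion of \autoref{lem.SpeciaFedderfptCriterion}. By \autoref{isolatedSing: R} the element $f$ defines an isolated singularity at $\m$, so $\sqrt{\tau(R,f^{\lambda})}=\m$ for every $\lambda$ and moreover $\fpt(S,f)=\fpt(R,f)$; thus it suffices to work in the regular local ring $R$ with the system of parameters $x_1,\dots,x_n$. Since $p\equiv -1\bmod d$, write $p+1=dm$ with $m\in\NN$, so that $p-d+1=d(m-1)$. Then the claimed value is of the form $\lambda=a/(q-1)$ with $q=p$ and
\[ a=\frac{n(p-d+1)+d}{d}=n(m-1)+1\in\NN. \]
By \autoref{lem.SpeciaFedderfptCriterion}, $\fpt(R,f)=\lambda$ if and only if $f^{a}\equiv u\,(x_1\cdots x_n)^{p-1}\bmod\m^{[p]}$ for some unit $u\in R$, so the entire proof reduces to establishing this single congruence.

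To that end I would expand $f^{a}$ by the multinomial theorem applied to $f=x_1^d+\cdots+x_n^d+(x_1\cdots x_n)^{d-2}$: a typical term is a multinomial coefficient times $(x_1\cdots x_n)^{j(d-2)}x_1^{dc_1}\cdots x_n^{dc_n}$ with $j+c_1+\cdots+c_n=a$, and the corresponding monomial $\prod_i x_i^{dc_i+j(d-2)}$ survives modulo $\m^{[p]}$ precisely when $dc_i+j(d-2)\le p-1$ for every $i$. Summing these $n$ inequalities, using $\sum_i c_i=a-j$ together with the identity $da=n(p-1)-\bigl(n(d-2)-d\bigr)$ (which is immediate from the formula for $a$), yields
\[ (j-1)\bigl(n(d-2)-d\bigr)\le 0. \]
The numerical hypotheses of \autoref{computation: s} force $n(d-2)-d>0$ in all cases, so $j\le 1$. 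The case $j=0$ is impossible: then each surviving $c_i$ satisfies $c_i\le\lfloor(p-1)/d\rfloor=m-1$, whence $a=\sum_i c_i\le n(m-1)=a-1$, a contradiction. Hence $j=1$, and now the sum of the $n$ inequalities is forced to be the equality $\sum_i\bigl(dc_i+(d-2)\bigr)=n(p-1)$ with each summand at most $p-1$; therefore $dc_i+(d-2)=p-1$, i.e.\ $c_i=m-1$, for all $i$. Consequently the only monomial of $f^{a}$ surviving modulo $\m^{[p]}$ is $(x_1\cdots x_n)^{p-1}$, arising from the single index $(j;c_1,\dots,c_n)=(1;m-1,\dots,m-1)$, with coefficient
\[ u=\binom{a}{1}\binom{a-1}{m-1,\dots,m-1}=a\cdot\frac{\bigl(n(m-1)\bigr)!}{\bigl((m-1)!\bigr)^{n}}. \]

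It remains to check that $u$ is a unit in $R$, i.e.\ $u\not\equiv 0\bmod p$. The inequality $a\le p-1$ reduces, via the formula for $a$, to $n(p-d+1)\le d(p-2)$, which holds because $n/d\le 1$ and, using $d>3$, $(p-2)/(p-d+1)\ge 1$ (the case $p-d+1=0$ being trivial). Hence $p\nmid a$, and $n(m-1)=a-1<p$, so each part $m-1$ of the multinomial coefficient $\binom{n(m-1)}{m-1,\dots,m-1}$ is a single base-$p$ digit whose $n$-fold sum is still $<p$; by Kummer's theorem there are no carries, so that coefficient is prime to $p$. Therefore $u\not\equiv 0\bmod p$, the required congruence holds, and $\fpt(S,f)=\fpt(R,f)=\lambda=\frac{n(p-d+1)+d}{d(p-1)}$.

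I expect the main obstacle to be the combinatorial and arithmetic bookkeeping of the second paragraph: extracting the clean inequality $(j-1)(n(d-2)-d)\le 0$ from the support condition, carefully ruling out $j=0$, and then exploiting the equality case at $j=1$ to collapse every surviving contribution to the single monomial $(x_1\cdots x_n)^{p-1}$. The hypotheses on $p,d,n$ are exactly what is needed to make $n(d-2)-d$ positive (so that $j$ is pinned down) and to keep $a\le p-1$ (so that the resulting coefficient is a unit); verifying these degeneracies is where the care lies, whereas the concluding unit computation via Kummer's theorem is routine.
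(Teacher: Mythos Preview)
Your proposal is correct and follows essentially the same approach as the paper: reduce to the congruence criterion of \autoref{lem.SpeciaFedderfptCriterion} via \autoref{isolatedSing: R}, expand $f^a$ multinomially, sum the survival inequalities to obtain $(j-1)(n(d-2)-d)\le 0$, eliminate $j=0$, and pin down the unique surviving term $(x_1\cdots x_n)^{p-1}$ with a coefficient that is a unit because $a\le p-1$. The only cosmetic differences are notational (your $m-1$ is the paper's $a$, your $a$ is the paper's $na+1$) and that you invoke Kummer's theorem where the paper simply observes that $na+1<p$ already forces the multinomial coefficient to be prime to $p$.
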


\begin{proof}  Set $a = (p-d+1)/d \in \mathbb{N}$.  According to \autoref{isolatedSing: R} and \autoref{lem.SpeciaFedderfptCriterion}, to show that $\fpt(S,f) = \fpt(R, f) = \frac{n a+1}{p-1}$, we must show that
\begin{equation} \label{expansion: e}
f^{na+1} = \sum_{\overset{s_1, \ldots, s_n, t \geq 0}{s_1 + \cdots + s_n + t\,=\,n a+1}} \binom{na+1}{s_1, \ldots, s_n, t} x_1^{ds_1 + t(d-2)} \cdots x_n^{ds_n +t(d-2)}.
\end{equation}
is congruent to $u (x_1 \cdots x_n)^{p-1} \bmod \m^{[p]}$ for some nonzero $u \in k$.  Our approach will be to show that the only summand in \eqref{expansion: e} not contained in $\m^{[p]}$ corresponds to the index $(s_1, \cdots, s_n, t) = (a, \cdots, a, 1)$; for this index, the associated monomial is
\[
\begin{array}{rl}
 (x_1^d \cdots x_n^d)^a (x_1 \cdots x_n)^{d-2}
=  (x_1 \cdots x_n)^{ (p-d+1) + (d-2)}
=  (x_1 \cdots x_n)^{ p-1},
\end{array}
\]
with coefficient $\binom{na + 1}{a, \cdots, a, 1}$,
which is nonzero modulo $p$ since $na+1 < p$ by our assumptions.

Towards this end, we begin by noting that if a term in \eqref{expansion: e} is not contained in $\m^{[p]}$, then $t = 0$ or $t=1$.  Indeed, for such a term, we must have that $ds_i + t(d-2) \leq p-1$ for each $1 \leq i \leq n$, and summing these inequalities shows that
\[
d (s_1+\cdots+s_n) + nt(d-2) \leq n(p-1).
\]
After substituting the identity $s_1 + \cdots + s_n = n a +1-t$ and isolating all terms with $t$ appearing on the left-hand side, we find that
$t(n(d-2) - d) \leq n(d-2) - d$.
The assumed conditions on $d$ and $n$ imply that $n(d-2)- d  > 0$, so that $t \leq 1$.

It remains to show that a term in \eqref{expansion: e} is in $\m^{[p]}$ if $t = 0$, or if $t=1$ and the index satisfies $(s_1, \cdots, s_n) \neq (a, \cdots, a)$.  However, in either case, it is easy to see that some $s_i \geq a+1$, so that the power of $x_i$ is at least $d(a + 1) = p+1$.
\end{proof}

We now turn our attention to the log canonical threshold.
First we recall the definition of a log canonical pair in our setting.

Suppose that $Y = \Spec R$ where $R$ is a regular local ring or polynomial ring.  In this case a $\bQ$-divisor is simply $\Delta = {a \over m} \Div(f)$ for some rational number ${a \over m}$ and some $0 \neq f \in R$.  Hence $(Y, \Delta)$ carries exactly the same information as $(R, f^{a \over m})$.  Furthermore, we can pick our canonical divisor $K_Y = 0$ and observe that $m \Delta = a \Div(f)$ is Cartier (in other words, $\Delta$ is $\bQ$-Cartier).

Now consider a birational map from a normal $X$, $\pi : X \to Y$.
In this case, the canonical divisor $K_X = K_{\pi}$ is an exceptional divisor which measures the Jacobian of the birational map $\pi$, see \cite[Section 2.4]{BenitoFaberSmithMeasuring}.  Write
\[
\Delta_X := {1 \over m} \pi^*(m\Delta) - K_X = {a \over m} \Div_X f - K_X,
\]
a $\bQ$-divisor that is supported on the union
of the strict transform $\pi_*^{-1}\Delta$ and the divisorial
component $E = \sum E_i$ of the exceptional locus  of $\pi$.

\begin{definition}
We say that $(Y, \Delta)$ is
{\em log canonical } if the coefficients of $\Delta_X$ are at most one for every
birational morphism $\pi: X \to Y$, with $X$ normal.
\end{definition}

The general condition of log canonicity is often impossible to verify since we need to check every birational morphism.
However, if $(Y, \Delta)$ admits a log resolution, the condition simplifies
greatly. Recall that a proper birational morphism $\pi: X \to Y$ of
varieties is called a \emph{log resolution} of $(Y, \Delta)$ if $X$ is smooth,
the exceptional set $E$ of $\pi$ is a divisor, and
$\supp(\pi^{\ast} \Delta) \cup \supp(E)$ is in simple normal crossings.
Then $(Y, \Delta)$ is log canonical if and only if the coefficients of $\Delta_X$ are at most one
for a single log resolution $\pi: X \to Y$ of $(Y, \Delta)$.  See, for instance, \cite[Section 2.3]{KollarMori} for futher discussion on this topic.

Now, $\lambda \Delta$ is $\bQ$-Cartier for every rational number $\lambda \geq 0$. We can then consider the set of all rational $\lambda \geq 0$ for which $(Y, \lambda \Delta)$ is log canonical. The supremum over all such
$\lambda$ is the {\em log canonical threshold} of $(Y, \Delta)$, denoted
$\lct(Y, \Delta)$.


\begin{proposition}
\label{prop.LctComputation}  In the context of \autoref{computation: s}, blowing up the origin in $\mathbb{A}^n_k$ provides a log resolution of $(\mathbb{A}^n_k, \Div(f))$.
\end{proposition}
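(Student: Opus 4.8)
The plan is to pass to the standard affine charts of the blowup $\pi\colon X \to \mathbb{A}^n_k$ of the origin and to reduce the verification to a single chart using the $S_n$-symmetry of $f$; away from the exceptional divisor there is nothing to check, because $V(f)$ is already smooth there by \autoref{isolatedSing: R}.

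To set up: $X$ is smooth, its exceptional set is a smooth prime divisor $E \cong \mathbb{P}^{n-1}_k$, and $\pi$ restricts to an isomorphism $X \setminus E \to \mathbb{A}^n_k \setminus \{0\}$. The numeric hypotheses in \autoref{computation: s} force $n(d-2) > d$ (check the three cases $n = 3$, $n = 4$, $n \geq 5$ separately), so the initial form of $f$ at the origin is $x_1^d + \cdots + x_n^d$, whence $\mult_0 f = d$; moreover $f$ is irreducible, since a non-reduced or reducible $f$ would make $V(f)$ singular along a set of dimension $\geq n-2 \geq 1$, contradicting \autoref{isolatedSing: R}. Writing $D = \Div(f)$ and $\widetilde{D}$ for its strict transform, we have $\pi^{*}D = \widetilde{D} + dE$, so it suffices to prove that $\widetilde{D} \cup E$ has simple normal crossings.

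On $X \setminus E$ the divisor $\widetilde{D}$ is isomorphic to $V(f) \setminus \{0\}$, which is smooth by \autoref{isolatedSing: R}, and it is disjoint from $E$, so there is nothing to check there. Near $E$, by symmetry it is enough to work in the chart with coordinates $y_1, \ldots, y_n$ where $x_1 = y_1$ and $x_i = y_1 y_i$ for $i \geq 2$, in which $E = \{y_1 = 0\}$. Direct substitution gives
\[
f \circ \pi = y_1^{d}\Bigl(1 + y_2^{d} + \cdots + y_n^{d} + y_1^{\,n(d-2)-d}(y_2 \cdots y_n)^{d-2}\Bigr) =: y_1^{d}\,g,
\]
so $\widetilde{D} = V(g)$ in this chart. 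On $\widetilde{D} \cap E$ we have $y_1 = 0$ and $1 + y_2^{d} + \cdots + y_n^{d} = 0$; since $n(d-2)-d \geq 1$, the last summand of $g$ and each of its $y_i$-derivatives with $i \geq 2$ vanish when $y_1 = 0$, so $\partial g/\partial y_i$ restricts along $E$ to $d\,y_i^{d-1}$ for $i \geq 2$. If all of these vanished at a point of $\widetilde{D} \cap E$, then (using $p \nmid d$) every $y_i$ with $i \geq 2$ would be zero, forcing $g = 1 \neq 0$, a contradiction. Hence at each point of $\widetilde{D} \cap E$ the differential $dg$ is nonzero and is not a multiple of $dy_1$; equivalently, $\widetilde{D}$ is smooth there and meets $E$ transversally, with $\widetilde{D} \cap E = \{y_1 = 0\} \cap \{1 + \sum_{i \geq 2} y_i^{d} = 0\}$ smooth. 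Since at most the two smooth divisors $E$ and $\widetilde{D}$ pass through any point of $X$, and they cross transversally, $\widetilde{D} \cup E$ is simple normal crossings; thus $\pi$ is a log resolution of $(\mathbb{A}^n_k, \Div(f))$.

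The only real content is the chart computation and the transversality check; the one place to be careful is the exponent $n(d-2)-d$, which is a positive integer precisely because of the hypotheses on $d$ and $n$. This positivity is exactly what makes the crossed term $y_1^{\,n(d-2)-d}(y_2 \cdots y_n)^{d-2}$ and its $y_i$-partials ($i \geq 2$) drop out along $E$, reducing the smoothness and transversality check to the elementary Jacobian computation for the Fermat-type polynomial $1 + y_2^{d} + \cdots + y_n^{d}$. Everything else — smoothness of $X$, that $\pi$ is an isomorphism off $E$, and the isolated-singularity input — is standard or already recorded in \autoref{isolatedSing: R}.
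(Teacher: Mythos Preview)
Your proof is correct and follows the same overall plan as the paper: blow up the origin, reduce by the $S_n$-symmetry to one affine chart, compute the local equation $g = 1 + y_2^d + \cdots + y_n^d + y_1^{\,n(d-2)-d}(y_2\cdots y_n)^{d-2}$ for the strict transform, and verify smoothness plus transversality with $E$.

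The one genuine difference is in how smoothness of the strict transform $\widetilde{D}$ is established on the chart. The paper produces an Euler-type identity
\[
dN \;=\; dN\,g \;+\; (N-d+2)\,y_1\,\tfrac{\partial g}{\partial y_1} \;-\; N \sum_{i\geq 2} y_i\,\tfrac{\partial g}{\partial y_i},\qquad N = n(d-2)-d,
\]
which shows in one stroke that $g$ and its partials cannot vanish simultaneously anywhere on the chart, using the full hypothesis $p \nmid dN$. You instead split into two cases: off $E$ you invoke \autoref{isolatedSing: R} (where the hypothesis $p \nmid dN$ has already been spent), and along $E$ you observe that the cross term drops out, reducing to the elementary Jacobian check for the Fermat polynomial $1 + \sum y_i^d$, which needs only $p \nmid d$. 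Your route is a bit more geometric and avoids having to discover the identity, at the cost of relying on the isolated-singularity remark; the paper's route is self-contained on the chart. Both are perfectly valid.
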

\begin{proof}  Let $\pi: X  \to \mathbb{A}^n_k$ be the blowup of $\mathbb{A}^n_k$ at the origin, $E$ the exceptional divisor of $\pi$, and $D$ the strict transform of $\Div(f)$.  Since $\pi^{\ast} \Div(f) = D + dE$, it suffices to show that $D$ is smooth and that $D$ and $E$ intersect transversally.

By symmetry, it suffices to establish these facts on the affine chart $U$ of $X$ on which $\pi$ is given by the map $S \to k[x_1, y_2, \cdots, y_n]$ sending $x_1 \mapsto x_1$ and $x_i \mapsto x_1 y_i$ for $2 \leq i \leq n$.  On this chart, $E$ is defined by $x_1$ and $D$ is defined by
\[ g = 1 + y_2^d + \cdots + y_n^d + x_1^{n(d-2) - d} (y_2 \cdots y_n)^{d-2}.\]
Given these equations,  it is apparent that $D$ and $E$ intersect transversally on $U$ since $p$ does not divide $d$.  Moreover, setting $N = n(d-2)-d$, the easily-verified identity
\[ d N = d N g + (N-d+2) x_1 \frac{\partial g}{\partial x_1} - N \sum_{i=2}^n y_i \frac{\partial g}{\partial y_i}\]
and our assumption that $p \nmid dN$ implies that $g$ is smooth on $U$.
\end{proof}

\begin{corollary}
\label{cor.LCTComputed}  In the context of \autoref{computation: s}, the log canonical threshold of $(\mathbb{A}^n_k, \Div(f))$ equals $n/d$.  In particular, if $p \equiv -1 \bmod d$, then $\lct(\mathbb{A}^n_k, \Div(f)) \neq \fpt(S, f)$, yet the denominator of the latter is not divisible by $p$.
\end{corollary}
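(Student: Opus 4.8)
The plan is to run the log resolution from \autoref{prop.LctComputation} through the standard discrepancy computation to pin down $\lct(\mathbb{A}^n_k, \Div(f))$, and then to compare that value with the formula for $\fpt(S,f)$ supplied by \autoref{prop.PNotDividingDenominatorExample}.

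First I would invoke \autoref{prop.LctComputation}: the blowup $\pi \colon X \to \mathbb{A}^n_k$ of the origin is a log resolution of $(\mathbb{A}^n_k, \Div(f))$, so by the discussion preceding that proposition the log canonical threshold can be read off from the coefficients of $\Delta_X$ on this single morphism. Exactly two prime divisors occur: the exceptional divisor $E$ and the strict transform $D$ of $\Div(f)$. For the blowup of a smooth point on a smooth $n$-fold one has $K_\pi = (n-1)E$. Next I would observe that the monomials $x_1^d, \ldots, x_n^d$ have degree $d$ while $(x_1 \cdots x_n)^{d-2}$ has degree $n(d-2) \geq d$ (indeed $n(d-2) - d > 0$ under the hypotheses of \autoref{computation: s}), so $\mult_0 \Div(f) = d$ and $\pi^*\Div(f) = D + dE$; moreover $\Div(f)$ is reduced, since $f$ has an isolated singularity (\autoref{isolatedSing: R}), so $D$ enters with coefficient $1$. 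Plugging into $\Delta_X = \lambda\, \pi^*\Div(f) - K_\pi$ for a rational $\lambda \geq 0$ then gives
\[
\Delta_X = \lambda D + \bigl( \lambda d - (n-1) \bigr) E,
\]
so $(\mathbb{A}^n_k, \lambda \Div(f))$ is log canonical if and only if $\lambda \leq 1$ and $\lambda \leq n/d$. Since $n \leq d$, the second inequality is the binding one, and $\lct(\mathbb{A}^n_k, \Div(f)) = n/d$.

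For the final assertion, assume $p \equiv -1 \bmod d$, so that $\fpt(S,f) = \frac{n(p-d+1)+d}{d(p-1)}$ by \autoref{prop.PNotDividingDenominatorExample}. To compare with $n/d$ I would simply clear denominators: $d\bigl(n(p-d+1)+d\bigr) - n \cdot d(p-1) = -d\bigl(n(d-2)-d\bigr) < 0$ because $n(d-2)-d > 0$; hence $\fpt(S,f) < n/d = \lct(\mathbb{A}^n_k, \Div(f))$, and in particular the two are unequal. Finally, in lowest terms the denominator of $\fpt(S,f)$ divides $d(p-1)$, and $p$ divides neither $d$ (by the standing hypothesis of \autoref{computation: s}) nor $p-1$, so $p$ does not divide that denominator.

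I do not expect a serious obstacle here: the argument is essentially bookkeeping. The one place that demands care is matching the sign and normalization conventions used in the paper for $K_\pi$ and $\Delta_X$ — so that $E$ genuinely contributes the coefficient $\lambda d - (n-1)$ to $\Delta_X$ — together with the short but easy-to-mis-sign arithmetic comparison of $d\bigl(n(p-d+1)+d\bigr)$ with $n \cdot d(p-1)$ in the last step.
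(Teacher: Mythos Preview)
Your argument is correct and follows essentially the same route as the paper's: both use the log resolution of \autoref{prop.LctComputation}, the identity $K_\pi=(n-1)E$, and $\pi^*\Div(f)=D+dE$ to read off $\lct=n/d$ from the discrepancy along $E$. You supply more detail than the paper does for the ``in particular'' clause---explicitly checking $\fpt(S,f)<n/d$ and that $p\nmid d(p-1)$---which the paper leaves implicit; this is fine and your arithmetic is correct.
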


\begin{proof}  We adopt the notation used in the proof of \autoref{prop.LctComputation}.  It is well-known that $K_{\pi} = (n-1)E$, so that  $K_{\pi} - \lambda \cdot \pi^{\ast} \Div(f) = (n-1-\lambda d) E  - \lambda D$.  Consequently,  $(\mathbb{A}^n_k, \lambda \Div(f))$ is log canonical if and only if $0 < \lambda \leq n/d$.
\end{proof}

\section{The left derivative of the $F$-signature function at the $F$-pure threshold}



In this section, we consider the \emph{$F$-signature function}\footnote{If
  $\varphi(R, f^t)$ denotes the function in \cite[Definition 2.4]{MonskyTeixeiraPFractals1}
  (with $I=\m$ and $h=f$) or the one in \cite[Definition 1.1]{MonskyTeixeiraPFractals2},
  then it is easy to see that the $F$-signature function satisfies the identity
  \[ s(R, f^t) = 1 - \varphi(R,f^t). \]
  Thus, in the settings considered by Monsky and Teixeira, many of the
  properties of $s(R, f^t)$ recalled in this section follow from the
  corresponding properties for $\varphi(R, f^t)$ established in
  \cite{MonskyTeixeiraPFractals1, MonskyTeixeiraPFractals2}.}
associated to an element $f$ of an $F$-finite regular local ring
$(R, \m)$. We begin by summarizing the needed theory, directing
the interested reader to \cite{BlickleSchwedeTuckerFSigPairs1} and
\cite{BlickleSchwedeTuckerTestIdeals2} for a complete development
with historical context.
Set $a_e(t) = \lambda_R\left(R/(\m^{[p^e]}: f^{\lceil t(p^e-1) \rceil})\right)$.
The $F$-signature is defined \cite[3.11]{BlickleSchwedeTuckerTestIdeals2} as
\[ s(R, f^t) = \lim_{e \to \infty} a_e(t)/p^{e\dim(R)}. \]
Assuming that $(R, f^t)$ is sharply $F$-pure, then we define the \emph{splitting prime} $P = P(R, f^t)$ to be the largest ideal such that
\[
f^{\lceil t(p^e-1)\rceil} \in (P^{[p^e]} : P)
\]
for all $e \geq 0$ (in other words $P$ is the largest proper uniformly $(f^t, F)$-compatible ideal).  It is a prime ideal, see \cite[Definition 3.2]{AberbachEnescuStructureOfFPure}, \cite{SchwedeCentersOfFPurity} and \cite[2.12]{BlickleSchwedeTuckerTestIdeals2} for further discussion.  By
\cite[4.2]{BlickleSchwedeTuckerTestIdeals2},
\[ 0 < \lim_{e \to \infty} \frac{a_e(t)}{p^{e\dim(R/P)}} \le 1. \]
The limit above is called the {\em $F$-splitting ratio} $r_F(R, f^t)$.
By definition, $s(R, f^t) \le r_F(R, f^t)$.
\begin{itemize}
\item{} If $t < \fpt(f)$, then
$P = 0$, so the $F$-signature and the $F$-splitting ratio agree.
\item{} When $t = \fpt(f)$ and $(R, f^t)$ is sharply $F$-pure\footnote{This implies that denominator of $t$ is not divisible by $p$ \cite{SchwedeSharpTestElements,HernandezFPurityOfHypersurfaces}.}, $\dim(R/P) < \dim(R)$ and
$s(R, f^{\fpt(f)}) = 0$.
\item{} Finally, if $(R, f^t)$ is not sharply $F$-pure (for instance if $t > \fpt(f)$), then
$f^{\lceil t(p^e-1) \rceil} \in \m^{[p^e]}$, so $a_e(t) = 0$.
\end{itemize}
Summarizing,
$s(R, f^t) > 0$ for all $t < \fpt(f)$ and $s(R, f^t) = 0$ for $t \ge \fpt(f)$.
In the case that $t = a/q$, the $F$-signature is computed as
\begin{equation}
\label{specialInputFSignature: e}
s(R, f^{t}) = \frac{ \lambda_R\left(R / (\frm^{[q]} : f^a )\right) }{ q^{\dim(R)}},
\end{equation}
which does not depend on the  particular representation $t=a/q$
\cite[Proposition 4.1]{BlickleSchwedeTuckerTestIdeals2}.

By \cite{BlickleSchwedeTuckerTestIdeals2}, all one-sided derivatives
of $s(R, f^t)$ exist.  In this section, we show that the left
derivative at $t=\fpt(f)$ equals zero whenever the $F$-pure threshold
is ``mild."  We note that  \autoref{lem.heightSplittingPrime} and
\autoref{thm.LeftDerivativeOfFSig} originally appear in the unpublished
manuscript \cite{CantonLeftDerivativeOfFSignature}.

\begin{lemma}\label{lem.heightSplittingPrime}
  Suppose that $f$ is a square-free element of an $F$-finite regular
  local ring $(R, \m)$.  If $q$ is a power of $p$ and $a$ is some positive
  integer less than $q-1$,  then the height of any ideal $\idealb$ of $R$
  containing $f$ with $f^a \in (\idealb^{[q]}: \idealb)$ is at least two.
\end{lemma}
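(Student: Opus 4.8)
The plan is to argue by contradiction. Since $R$ is regular local it is a domain, and $\idealb$ contains the nonzero element $f$, so $\height(\idealb) \geq 1$; assume for contradiction that equality holds. Then $\idealb$ has a minimal prime $\bp$ with $\height(\bp) = 1$. Regular local rings are unique factorization domains (Auslander--Buchsbaum), so $\bp$ is principal, say $\bp = \langle g \rangle$ with $g$ irreducible; consequently $R_{\bp}$ is a discrete valuation ring with uniformizer $g$, and I will write $v$ for the associated valuation.

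The next step is to localize the containment $f^a \idealb \subseteq \idealb^{[q]}$ at $\bp$. Formation of Frobenius powers of finitely generated ideals commutes with localization, so this yields $f^a \idealb_{\bp} \subseteq (\idealb_{\bp})^{[q]}$ inside $R_{\bp}$. Every nonzero ideal of a DVR is a power of its maximal ideal, and $\idealb_{\bp}$ is nonzero (it contains $f$) and proper (since $\idealb \subseteq \bp$), so $\idealb_{\bp} = g^{m} R_{\bp}$ for some integer $m \geq 1$; hence $(\idealb_{\bp})^{[q]} = g^{mq} R_{\bp}$.

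Now the square-free hypothesis enters. Because $f \in \idealb \subseteq \bp = \langle g \rangle$, we have $g \mid f$, and since $f$ is square-free, $g^{2} \nmid f$; thus $v(f) = 1$ and $v(f^{a}) = a$. Comparing valuations of generators in the localized containment $f^{a} \idealb_{\bp} \subseteq (\idealb_{\bp})^{[q]}$ gives $a + m \geq mq$, i.e.\ $a \geq m(q - 1) \geq q - 1$, contradicting the assumption that $a$ is a positive integer less than $q - 1$. Therefore $\height(\idealb) \geq 2$, as claimed.

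The argument is short once the setup is in place, so I do not anticipate a serious obstacle; the points that need care are the appeal to the UFD property of $R$ to produce a principal height-one prime (and hence a DVR localization), the compatibility of Frobenius powers with localization, and --- most importantly --- pinning down where square-freeness is used. It is precisely what forces $v(f) = 1$ rather than merely $v(f) \geq 1$; without it the statement fails, since for $f = g^{k}$ with $k \geq 2$ the ideal $\idealb = \langle g \rangle$ already satisfies $f^{a} \in \icolon{\idealb}{\idealb^{[q]}}$ for $a = \roundup{(q-1)/k}$, which can be far below $q - 1$.
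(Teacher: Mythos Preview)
Your proof is correct and follows the same overall shape as the paper's: assume a height-one minimal prime $\bp$ over $\idealb$, use the UFD property of $R$ to write $\bp=\langle g\rangle$, invoke square-freeness to see that $g$ divides $f$ exactly once, and contradict $a<q-1$.

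The one technical difference is in how you pass from $\idealb$ to the height-one prime. The paper shows directly that $f^{a}\cdot P\not\subseteq P^{[q]}$ for the minimal prime $P=\langle f_1\rangle$, and then invokes \cite[Proposition~4.10]{SchwedeCentersOfFPurity} to say this contradicts $f^{a}\in(\idealb^{[q]}:\idealb)$ (i.e., the $F$-compatibility condition passes to minimal primes). You instead localize at $\bp$ and work with $\idealb_{\bp}$ itself inside the DVR $R_{\bp}$, where the valuation computation is immediate. Your route is more self-contained, since it avoids the external citation entirely; the paper's route keeps the argument in $R$ but trades that for an appeal to the theory of uniformly $F$-compatible ideals. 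One small observation: in your setup one actually has $m=1$, since $f\in\idealb_{\bp}=g^{m}R_{\bp}$ forces $1=v(f)\geq m$; but your inequality $a\geq m(q-1)\geq q-1$ is valid without noticing this.
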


\begin{proof}
By way of contradiction, assume that there exists a minimal prime $P$ of $\idealb$ with $\operatorname{ht}(P) = 1$.  As $P$ is prime, the containment $f \in P$ allows us to write $f = f_1 \cdots f_r$ as a product of distinct irreducible factors with $f_1 \in P$.  However, being a height one prime in a regular local ring, $P$ is principal, and must therefore be generated by the element $f_1$.  Moreover, the irreducibility of $f_1$ and the assumption that $a \leq q-2$ implies that
\[ f^a \cdot P =
f^a \cdot \langle f_1 \rangle = \langle f_1^{a+1} f_2^a \cdots f_r^a \rangle \not \subseteq  \langle f_1^q \rangle = P^{[q]}.
\]
Thus, $f^a \notin \icolon{P}{P^{[q]}}$, contradicting \cite[Proposition 4.10]{SchwedeCentersOfFPurity}.
\end{proof}

\begin{lemma}
\label{vanishingDerivativeRestatement: L}
Suppose $f$ is an element of a regular $F$-finite local ring $(R, \m)$.  If $q$ is a power of $p$, then the left derivative of $s(R, f^t)$ at $t = \fpt(f)$ equals zero if and only if
\[ \lim_{e \to \infty} \frac{ \lambda_R (R/ (\m^{[q^e]} : f^{\lceil q^e \fpt(f)  \rceil - 1}) )}{q^{e(\dim(R)-1)}} = 0.\]
\end{lemma}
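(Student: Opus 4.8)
The plan is to translate the statement about the left derivative into a statement about a limit of certain colengths, using the explicit formula \eqref{specialInputFSignature: e} for the $F$-signature at rational parameters of the form $a/q^e$. First I would write $\fpt(f) = \ell/m$ in lowest terms; since the $F$-signature function vanishes at $\fpt(f)$ and is supported on $[0, \fpt(f)]$, the left derivative at $\fpt(f)$ is
\[ \lim_{t \to \fpt(f)^-} \frac{s(R, f^t) - 0}{t - \fpt(f)} = -\lim_{t \to \fpt(f)^-} \frac{s(R, f^t)}{\fpt(f) - t}. \]
The idea is to compute this limit along the particular sequence $t_e = (\lceil q^e \fpt(f) \rceil - 1)/q^e$, which approaches $\fpt(f)$ from below as $e \to \infty$ with $\fpt(f) - t_e \asymp 1/q^e$; since the one-sided derivative is known to exist by \cite{BlickleSchwedeTuckerTestIdeals2}, the limit along this sequence computes it. Plugging $t_e = a_e/q^e$ with $a_e = \lceil q^e \fpt(f) \rceil - 1$ into \eqref{specialInputFSignature: e} gives
\[ s(R, f^{t_e}) = \frac{\lambda_R\!\left(R/(\m^{[q^e]} : f^{\,\lceil q^e \fpt(f)\rceil - 1})\right)}{q^{e\dim(R)}}. \]

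Next I would control the denominator $\fpt(f) - t_e$. Writing $\fpt(f) = \ell/m$, one has $\lceil q^e \ell/m \rceil - 1 = (q^e \ell - r_e)/m$ for some integer $r_e$ with $0 \le r_e \le m-1$ (in fact $r_e \ge 1$ since $q^e\ell/m$ is not an integer once we know $p \nmid m$, but we do not even need that), so
\[ \fpt(f) - t_e = \frac{\ell}{m} - \frac{q^e\ell - r_e}{m q^e} = \frac{r_e}{m q^e}, \]
which lies between $0$ and $(m-1)/(m q^e)$, hence is $\Theta(1/q^e)$ up to bounded multiplicative constants (bounded below by $1/(mq^e)$ when $r_e \ge 1$; if $r_e = 0$ for infinitely many $e$ then along that subsequence $t_e = \fpt(f)$ and $s = 0$, which is harmlessly consistent). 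Therefore
\[ \frac{s(R, f^{t_e})}{\fpt(f) - t_e} = \frac{m q^e}{r_e}\cdot \frac{\lambda_R\!\left(R/(\m^{[q^e]} : f^{\lceil q^e\fpt(f)\rceil - 1})\right)}{q^{e\dim(R)}} = \frac{m}{r_e}\cdot \frac{\lambda_R\!\left(R/(\m^{[q^e]} : f^{\lceil q^e\fpt(f)\rceil - 1})\right)}{q^{e(\dim(R)-1)}}. \]
Since $1 \le r_e \le m-1$ is bounded above and below by constants independent of $e$, the left-hand side tends to $0$ as $e \to \infty$ if and only if the displayed quantity in the lemma statement tends to $0$. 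This gives the equivalence.

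The main obstacle is the passage from "the limit along the subsequence $\{t_e\}$ vanishes" to "the left derivative vanishes" and conversely: a priori the quotient $s(R,f^t)/(\fpt(f)-t)$ could oscillate, so one must invoke the existence of the one-sided derivative. The clean way is: the left derivative exists (cited from \cite{BlickleSchwedeTuckerTestIdeals2}), call it $D$; then $D = \lim_{t\to\fpt(f)^-}(s(R,f^t)-0)/(t - \fpt(f)) = -\lim_{e\to\infty} s(R,f^{t_e})/(\fpt(f) - t_e)$ because $t_e \to \fpt(f)^-$; the computation above shows this last limit is $0$ iff the lemma's displayed limit is $0$. I would also need to handle the edge case where $r_e = 0$ infinitely often — but if $p \nmid m$ (which, while not assumed in this lemma, can be discussed) $q^e \ell/m$ is never an integer; in general if $r_e = 0$ then $t_e = \fpt(f)$ exactly and $s(R, f^{t_e}) = 0$, so along such a subsequence the quotient is the indeterminate $0/0$ and should simply be excluded, passing instead to the cofinal subsequence where $r_e \ge 1$, which suffices to compute $D$ since the derivative exists. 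Finally I would note that the choice of $q$ (any fixed power of $p$) is immaterial: the formula \eqref{specialInputFSignature: e} is independent of the representation, and replacing $p$ by $q$ throughout only reindexes the sequence, so the stated equivalence holds for every power $q$ of $p$.
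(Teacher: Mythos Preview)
Your approach is essentially identical to the paper's: both compute the (existing) left derivative along the sequence $t_e = (\lceil q^e\fpt(f)\rceil - 1)/q^e$, apply \eqref{specialInputFSignature: e}, and observe that $\fpt(f) - t_e$ is bounded between positive constant multiples of $1/q^e$. One minor slip: with $r_e := q^e\ell - m(\lceil q^e\ell/m\rceil - 1)$ the correct range is $1 \le r_e \le m$ (from $\lceil x\rceil - 1 < x \le \lceil x\rceil$), so the case $r_e = 0$ never occurs and your edge-case discussion is unnecessary; the paper records the same bound as $1/d \le \beta_e \le 1$ where $\beta_e = r_e/m$.
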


\begin{proof}  Set $\alpha = \fpt(f)$.  As the sequence $\alpha_e = \frac{ \lceil  \alpha q^e \rceil - 1 }{q^e}$ converges to $\alpha$ from below, the fact that $s(R,f^{\alpha}) = 0$ and \eqref{specialInputFSignature: e} allow us to realize this left-derivative as
\begin{equation}
\label{leftDerivativeRestatement: e}
 \lim_{e \to \infty} \frac{s(R, f^{\alpha_e})}{\alpha_e - \alpha} = - \lim_{e \to \infty} \frac{ \lambda_R( R/ (\m^{[q^e]} : f^{\lceil \alpha q^e \rceil - 1}))}{q^{e(\dim R - 1)} \cdot \beta_e},
 \end{equation}  where $\beta_e = \alpha q^e - \lceil \alpha q^e \rceil +1$.   To complete the proof, it suffices to observe that $\beta_e$ is a bounded sequence that is bounded away from zero (indeed, if $d$ is a denominator for the rational number $\alpha$,  it is straightforward to verify that $1/d \leq \beta_e \leq 1$ for every $e \geq 1$).
\end{proof}

\begin{remark}
\label{leftDerivativeRestatement: R}
In the context of \autoref{vanishingDerivativeRestatement: L}, suppose that $\fpt(f) = a/(q-1)$ for $q$ a power of $p$ and $a$ a positive integer.  Setting $\delta_e = \frac{q^e-1}{q-1}$ and substituting the identity $\lceil q^e \fpt(f) \rceil  = a \delta_e + 1$ into \eqref{leftDerivativeRestatement: e} shows that the left derivative of $s(R,f^t)$ at $t=\fpt(f)$ equals \[ - \frac{1}{\fpt(f)} \cdot \lim \limits_{e \to \infty} \frac{\lambda_R\left(R/(\m^{[q^e]} : f^{a \delta_e })\right)}{{q^{e (\dim(R)-1)}}}.\]
\end{remark}

\begin{theorem}
\label{thm.LeftDerivativeOfFSig}
 Suppose that $f$ is a square-free element of an $F$-finite regular local ring $(R, \m)$.
If the $F$-pure threshold of $f$ is less than one, and $p$ does not divide its denominator, then the left derivative of $s(R, f^t)$ at $t=\fpt(f)$ is zero.
\end{theorem}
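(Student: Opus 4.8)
Let $\alpha = \fpt(f) = a/(q_0-1)$ in lowest terms, where $q_0$ is a power of $p$ (this is the form dictated by the hypothesis that $p$ does not divide the denominator). Since $\alpha < 1$ by assumption, we have $a < q_0 - 1$. The goal is to show the left derivative of $s(R, f^t)$ at $t = \alpha$ vanishes. By \autoref{leftDerivativeRestatement: R}, applied with any power $q$ of $p$ that is itself a power of $q_0$ (say just $q = q_0$), it suffices to show
\[
\lim_{e \to \infty} \frac{\lambda_R\left(R/(\m^{[q_0^e]} : f^{a\delta_e})\right)}{q_0^{e(\dim R - 1)}} = 0,
\]
where $\delta_e = \frac{q_0^e - 1}{q_0 - 1}$. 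Note that $a \delta_e = a \cdot \frac{q_0^e-1}{q_0-1}$, and this is the ``approximating'' exponent: $a\delta_e = \lceil q_0^e \alpha \rceil - 1$, one less than the numerator appearing in the sharply-$F$-pure representation $\alpha = a\delta_e / (q_0^e - 1)$ at level $q_0^e$.

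**Key steps.** First I would set $\idealb = \tau(R, f^\alpha)$, the test ideal. Since $p$ does not divide the denominator of $\alpha$, the pair $(R, f^\alpha)$ is sharply $F$-pure, so $\idealb$ is a proper radical ideal, it contains $f$ (by \autoref{prop.characterization}), and it satisfies $f^{a\delta_e} \in (\idealb^{[q_0^e]} : \idealb)$ for all $e$ (this is the defining uniform $(f^\alpha, F)$-compatibility property, which at the level $q_0^e$ reads exactly this way since the parameter $\alpha = a\delta_e/(q_0^e-1)$ is insensitive to the representation). The second step is to invoke \autoref{lem.heightSplittingPrime}: because $f$ is square-free, $f \in \idealb$, $f^{a\delta_e} \in (\idealb^{[q_0^e]}:\idealb)$, and $a\delta_e < q_0^e - 1$ (which holds since $a < q_0 - 1$ forces $a\delta_e = a\frac{q_0^e-1}{q_0-1} \le (q_0-2)\frac{q_0^e-1}{q_0-1} < q_0^e - 1$), every minimal prime of $\idealb$ has height at least two; hence $\dim(R/\idealb) \le \dim R - 2$.

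The third step is a containment comparison: since $\idealb$ is uniformly $(f^\alpha, F)$-compatible, $f^{a\delta_e} \in (\idealb^{[q_0^e]}:\idealb)$, which rearranges to $f^{a\delta_e}\idealb \subseteq \idealb^{[q_0^e]}$, i.e. $\idealb \subseteq (\idealb^{[q_0^e]} : f^{a\delta_e})$. In particular, intersecting with the relevant colon ideals and using that $\idealb \subseteq \m$ together with the containment $(\m^{[q_0^e]} : f^{a\delta_e}) \cdot f^{a\delta_e} \subseteq \m^{[q_0^e]}$, one shows $(\m^{[q_0^e]} : f^{a\delta_e}) \subseteq (\idealb^{[q_0^e]} : f^{a\delta_e} \idealb) \supseteq \idealb$; the upshot I want is that the quotient $R/(\m^{[q_0^e]} : f^{a\delta_e})$ is a module over $R/\idealb$, because $\idealb \cdot (\text{something}) \subseteq (\m^{[q_0^e]}:f^{a\delta_e})$ — concretely, $f^{a\delta_e}\idealb \subseteq \idealb^{[q_0^e]} \subseteq \m^{[q_0^e]}$ shows $\idealb \subseteq (\m^{[q_0^e]}:f^{a\delta_e})$, so the quotient is killed by $\idealb$ and is a faithful-ish module over the lower-dimensional ring $R/\idealb$. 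Therefore $\lambda_R(R/(\m^{[q_0^e]}:f^{a\delta_e})) = O(q_0^{e \dim(R/\idealb)}) = O(q_0^{e(\dim R - 2)})$: the colon module, being annihilated by $\idealb$, has length growing at most like the Hilbert–Kunz-type growth of a module over $R/\idealb$, whose dimension is at most $\dim R - 2$. Dividing by $q_0^{e(\dim R - 1)}$ and letting $e \to \infty$ gives the limit zero, as desired.

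**Main obstacle.** The genuinely delicate point is making the length bound in the third step rigorous: I need that annihilation by $\idealb$ forces $\lambda_R(R/(\m^{[q_0^e]}:f^{a\delta_e}))$ to grow no faster than $q_0^{e\cdot\dim(R/\idealb)}$. This is not immediate from ``annihilated by $\idealb$'' alone, because the module also depends on $e$ through the Frobenius power $\m^{[q_0^e]}$; one must observe that $R/(\m^{[q_0^e]}:f^{a\delta_e}) \cong f^{a\delta_e}R/(f^{a\delta_e}R \cap \m^{[q_0^e]})$ is a submodule of $R/\m^{[q_0^e]}$ annihilated by $\idealb$, hence a submodule of $(0 :_{R/\m^{[q_0^e]}} \idealb) = \operatorname{Hom}(R/\idealb, R/\m^{[q_0^e]})$, and then bound the length of the latter by a Hilbert–Kunz argument over $R/\idealb$ — precisely, $\lambda_R(R/\m^{[q_0^e]} \text{ restricted to the } \idealb\text{-torsion})$ behaves like $\lambda_{R/\idealb}((R/\idealb)/\m^{[q_0^e]}(R/\idealb)) \sim e_{\HK} \cdot q_0^{e\dim(R/\idealb)}$. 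Since $\dim(R/\idealb) \le \dim R - 2$ by \autoref{lem.heightSplittingPrime}, this is $o(q_0^{e(\dim R - 1)})$, completing the proof. I expect this Hilbert–Kunz-style length estimate to be the part requiring the most care, though it is standard; everything else is bookkeeping with colon ideals and the results already established.
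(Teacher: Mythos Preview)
Your proof is correct and follows essentially the same path as the paper's: take $\idealb$ as in \autoref{prop.characterization} (the test ideal works), use $f^{a\delta_e}\idealb \subseteq \idealb^{[q^e]} \subseteq \m^{[q^e]}$ to get $\idealb \subseteq (\m^{[q^e]}:f^{a\delta_e})$, and then bound the length via Hilbert--Kunz on $A = R/\idealb$, whose dimension is at most $\dim R - 2$ by \autoref{lem.heightSplittingPrime}. Your ``main obstacle'' is easier than you make it: since $\m^{[q^e]}$ is already contained in the colon ideal, you have $\idealb + \m^{[q^e]} \subseteq (\m^{[q^e]}:f^{a\delta_e})$, whence $\lambda_R\big(R/(\m^{[q^e]}:f^{a\delta_e})\big) \leq \lambda_R\big(R/(\idealb+\m^{[q^e]})\big) = \lambda_A(A/\m^{[q^e]}A)$ directly, and Monsky's theorem finishes it---there is no need for the detour through $\Hom(R/\idealb, R/\m^{[q^e]})$.
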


\begin{proof}  Write $\fpt(f) = \frac{a}{q-1}$, and set $\delta_e = \frac{q^e-1}{q-1}$.  By \autoref{prop.characterization}, there exists $f \in \idealb \subseteq \m$ such that $f^a \cdot \idealb \subseteq \idealb^{[q]}$.  Inducing on $e$ shows that $f^{a \delta_e} \cdot \idealb \subseteq \idealb^{[q^e]}$ for all $e \geq 1$, so that $\idealb + \m^{[q^e]} \subseteq (\m^{[q^e]} : f^{a \delta_e})$ for every $e \geq 1$.  Setting $A = R/\idealb$, this and \cite{MonskyHKFunction} show that  \[ \lambda_R  ( R / (\m^{[q^e]} : f^{a \delta_e}) ) \leq \lambda_R ( R / (\idealb + \m^{[q^e]}) ) = \lambda_A ( A / \m^{[q^e]} A) = e_{\operatorname{HK}}(A) \cdot q^{e \dim A} + \epsilon_q, \] where $e_{\operatorname{HK}}(A)$ is the Hilbert-Kunz multiplicity of $A$ and $\epsilon_q = O(q^{e (\dim A -1)})$.   To conclude the proof, simply note that  $\dim A \leq \dim R - 2$ by \autoref{lem.heightSplittingPrime}, so that the limit in  \autoref{leftDerivativeRestatement: R} equals zero.
\end{proof}

The hypothesis that $f$ be square-free in \autoref{thm.LeftDerivativeOfFSig} is necessary, as we see below.

\begin{example}
If $f = x^2 y \in R= \mathbb{F}_p[[x,y]]$ with $p \ne 2$, then
$\fpt(f) = \frac{1}{2}$. However, as $f$ is a monomial, it is easy
to compute that the left derivative of $s(R, f^t)$ at
$t=\frac{1}{2}$ is $-1$:  \ Indeed, the expression
$a\delta_e$ in \autoref{leftDerivativeRestatement: R} equals
$(p^e - 1)/2$, and
\begin{align*}
  \m^{[p^e]}:f^{a\delta_e} &= \m^{[p^e]}:(x^{p^e-1}y^{(p^e-1)/2}) 
                           = (x, y^{(p^e+1)/2}).
\end{align*}
Therefore,
\[ \lambda_R\left(R/(\m^{[p^e]}:f^{a\delta_e})\right) = (p^e-1)/2, \]
so the limit in \autoref{leftDerivativeRestatement: R} equals $-1$.
\end{example}

The following refinement of the argument presented above appears in \cite{CantonLeftDerivativeOfFSignature}.

\begin{remark}[Additional statements involving splitting primes]
\label{CantonRemark: R}
We adopt the context of \autoref{vanishingDerivativeRestatement: L}.  Given a positive integer $n$, the limit
\[ \ell_n(R,f) := \lim_{t \to \fpt(f)^-} \frac{s(R, f^{t})}{(t - \fpt(f))^n},\]
can be thought of as an ``approximate left $n^{\text{th}}$ derivative'' of the $F$-signature function at the $F$-pure threshold.  If $\fpt(f) = a/(q-1)$ and $\delta_e = \frac{q^e-1}{q-1}$, a straightforward generalization of \eqref{leftDerivativeRestatement: e} and \autoref{leftDerivativeRestatement: R} shows that if $\ell_n(R,f)$ exists, then
\begin{equation}
\label{quasiDerivative: e}
 \ell_n(R,f) = \left(- \frac{1}{\fpt(f)} \right)^n \cdot \lim_{e \to \infty} \frac{ \lambda_R (R/ (\m^{[q^e]} : f^{a \delta_e}))}{q^{e(\dim R - n)}}.
 \end{equation}

The vanishing of the limit in \eqref{quasiDerivative: e} is determined
by the height of the splitting prime $P = P(R, f^{\fpt(f)})$.
In light of \eqref{quasiDerivative: e},
\cite[Definition 4.5]{BlickleSchwedeTuckerFSigPairs1} may be restated as
 \begin{equation}
 \label{dimSplittingPrime: e}
\operatorname{ht} P = \min \left\{ n \in \mathbb{N} \ : \ \lim_{e \to \infty} \frac{\lambda_R\left( R/ (\frm^{[q^e]} : f^{ a \delta_e }  \right) }{q^{e(\dim R - n)}} \neq 0 \right\} = \min \{ n : \ell_n(R,f) \neq 0 \},
\end{equation}
where in the last equality, we assume that $\ell_n(R,f)$ exists for every $n \in \mathbb{N}$.

Observe that \eqref{quasiDerivative: e} and the first equality in
\eqref{dimSplittingPrime: e}  give another proof of
\autoref{thm.LeftDerivativeOfFSig}:  in this case,
\autoref{lem.heightSplittingPrime} shows that $\operatorname{ht} P \geq 2$,
and so $\ell_1(R,f)$ (which always exists) equals zero.  Finally, we observe
that by \eqref{quasiDerivative: e},
\[ \ell_{\operatorname{ht} P}(R,f) =  \left(- \frac{1}{\fpt(f)} \right)^{\operatorname{ht} P} \cdot \lim_{e \to \infty} \frac{ \lambda_R (R/ (\m^{[q^e]} : f^{a \delta_e}))}{q^{e(\dim R/P)}} = \frac{r_F(R, f^{\fpt(f)})}{\left( - \fpt(f) \right)^{\operatorname{ht} P}}. \]
\end{remark}

In the final result of this section, we show that the left derivative of the F-signature function at the $F$-pure threshold also vanishes whenever the $F$-pure threshold agrees with the log canonical threshold.  In preparation for \autoref{thm.FsigDerivativeNonzeroIfFPT=LCT}, we recall some standard notation and basic facts:  If $R$ is an arbitrary ring of characteristic $p>0$, then $F^e_{\ast} R$ will denote the $R$-module obtained from restriction of scalars via the $e^{\text{th}}$ iterate of the Frobenius map.  Given an element $x \in R$, we denote the corresponding element in $F^e_{\ast} R$ by $F^e_{\ast} x$; in this notation, $r F^e_{\ast} x = F^e_{\ast} (r^{p^e} x)$ for every $r,x \in R$.  If $R$ is a domain with fraction field $\mathbf{K}$, then any map $\phi \in \operatorname{Hom}_R(F^e_{\ast} R, R)$ extends to one in $\operatorname{Hom}_{\mathbf{K}}(F^e_{\ast} \mathbf{K}, \mathbf{K})$ via the rule
\[ \phi \left( F^e_{\ast} \left(\frac{x}{y} \right) \right):= \frac{ \phi( F^e_{\ast}(y^{p-1} x))}{y}. \]
Finally recall that if $(R, \m)$ is $F$-finite and regular, then for every $e \geq 1$ and $g \in R$,
\begin{equation}
\label{colonViaHom: e}
g \in \m^{[p^e]} \text{ if and only if } \phi( F^e_{\ast} g) \subseteq \m \text{ for every $\phi \in \operatorname{Hom}_R(F^e_{\ast} R, R)$.}
\end{equation}

\begin{theorem}
\label{thm.FsigDerivativeNonzeroIfFPT=LCT}
Suppose that $f$ is an element of an $F$-finite 
regular local ring $(R, \frm)$ of dimension at least two. Assume,
further, that there exists a prime {exceptional} divisor
$E$ of a proper birational morphism $\pi : Y \to X = \Spec(R)$
with $Y$ normal such that $\frm \in \pi(E)$, and that the $E$-coefficient of
$K_Y - \fpt(f) \cdot  \Div_Y(f)$ is  $-1$.
(Note these hypotheses hold if a log resolution of singularities
exists, $\fpt(f) = \lct(f)<1$, and 
$\langle f \rangle$ is radical.)
In this case, the left derivative of $s(R, f^t)$ at $t=\fpt(f)$ is zero.
\end{theorem}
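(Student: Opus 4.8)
The goal is to show that the left derivative of $s(R,f^t)$ at $t=\fpt(f)$ vanishes, and by \autoref{vanishingDerivativeRestatement: L} (or the reformulation in \autoref{leftDerivativeRestatement: R}) this amounts to a growth estimate on $\lambda_R\!\left(R/(\m^{[q^e]} : f^{\lceil q^e\fpt(f)\rceil - 1})\right)$, namely that it is $o(q^{e(\dim R - 1)})$. In the previous theorem this was achieved by exhibiting a uniformly $(f^{\fpt(f)},F)$-compatible ideal $\idealb$ of height $\geq 2$ and bounding the colon ideal below by $\idealb + \m^{[q^e]}$; the Hilbert--Kunz estimate of Monsky then did the rest. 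Here we no longer know the denominator of $\fpt(f)$ is prime-to-$p$, so there may be no such $\idealb$ coming from a test-ideal argument. Instead the plan is to extract the needed ideal from the \emph{geometry}: the exceptional divisor $E$ with log discrepancy zero should produce, via a trace/valuation argument, a proper ideal $\idealb \subseteq \m$ that is ``almost'' $(f^t,F)$-compatible in a way sufficient to bound the colon lengths, and whose vanishing locus has codimension at least two in $\Spec R$ (because $\pi(E)$ meets $\m$ but $E$ is exceptional, so $\pi(E)$ is a proper closed subset, forcing codimension $\geq 2$ once we rule out codimension $1$ — which is where squarefree-ness / radicality of $\langle f\rangle$ enters, exactly as in \autoref{lem.heightSplittingPrime}).

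**Key steps, in order.** First, translate the discrepancy condition on $E$ into an arithmetic statement: if $v = v_E$ is the divisorial valuation with $k_E := v(K_Y/X)$ and $c := v(f)$, the hypothesis that the $E$-coefficient of $K_Y - \fpt(f)\Div_Y(f)$ is $-1$ says $k_E - \fpt(f)\cdot c = -1$, i.e. $\fpt(f) = (k_E+1)/c$. Second, consider the ideal $\idealb_m := \{ r \in R : v(r) \geq m \}$ for a suitable $m$ (roughly $m \approx c/d$ where $d$ is a denominator of $\fpt(f)$), or more precisely work with the $R$-submodule of $\mathbf K$ cut out by requiring order $\geq$ something along $E$. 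Third, use the extension of splittings $\phi\in\Hom_R(F^e_*R,R)$ to $\Hom_{\mathbf K}(F^e_*\mathbf K,\mathbf K)$ recalled just before the theorem, together with the valuation-theoretic behavior of such maps (a $p^{-e}$-linear map drops $v$-order by a controlled amount), to show $f^{\lceil t(q^e-1)\rceil}\idealb_m \subseteq \m^{[q^e]} + (\text{ideal of order} \geq m \text{ along } E)$ — the point being that an element of large $v_E$-order cannot be pushed out of $\m^{[q^e]}$ by multiplying by $f^{\sim t q^e}$, since the log discrepancy being exactly $-1$ makes the valuation bookkeeping come out tight. Fourth, conclude that $(\m^{[q^e]} : f^{\lceil t(q^e-1)\rceil})$ contains an ideal $J_e$ with $\lambda_R(R/J_e) = O(q^{e\cdot(\dim R - 2)})$ (or at least $o(q^{e(\dim R-1)})$): here $J_e$ is built from $\idealb_m + \m^{[q^e]}$ and its colength is governed by $e_{\HK}$ of $R/\idealb_m$, a ring of dimension $\leq \dim R - 2$ because $V(\idealb_m) \subseteq \pi(E)$ is a proper subvariety not contained in any divisor $\Div(f_i)$. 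Finally, feed this into \autoref{leftDerivativeRestatement: R} to get the left derivative is zero.

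**The main obstacle.** The hard part is Step 3: controlling how an arbitrary $\phi \in \Hom_R(F^e_*R,R)$, extended to the fraction field, interacts with the divisorial valuation $v_E$. Over a regular ring the near-Frobenius splittings are concretely generated by the $R$-module generator $F^e_*(x_1\cdots x_n)^{p^e-1}$-dual map, so one can try to compute $v_E$ of images directly in a local model of the blowup; but $Y$ is only assumed normal (not a log resolution), so one cannot immediately reduce to a monomial computation. The clean way around this is probably to pass to a log resolution dominating $Y$ (allowed under the parenthetical hypothesis, and in general the statement is phrased so that the existence of such an $E$ on \emph{some} normal $Y$ is the hypothesis), where $E$ becomes a simple normal crossings divisor and $f$, $K$ become monomial along it; then the $p^{-e}$-linear maps can be analyzed coordinate-by-coordinate, and the equality $k_E+1 = \fpt(f)\cdot c$ is exactly the numerical coincidence that makes the relevant exponent land on the ``boundary,'' forcing the colon ideal to be large. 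One must also be careful that $\fpt(f)<1$ is used (to keep $a < q-1$, so that \autoref{lem.heightSplittingPrime}-type reasoning applies and the relevant ideal genuinely has height $\geq 2$ rather than being principal), and that squarefree-ness of $f$ — rather than radicality of $\langle f\rangle$, which is the same thing in a regular ring — rules out the height-one case exactly as in the proof of \autoref{lem.heightSplittingPrime}.
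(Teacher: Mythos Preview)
Your skeleton is essentially the paper's: use the divisorial valuation $v=v_E$ to build an ideal that sits inside every $(\m^{[p^e]}:f^{\lceil \alpha p^e\rceil -1})$, then invoke \autoref{vanishingDerivativeRestatement: L}. But three aspects of your plan diverge from the paper's execution, and in each case the paper's route is cleaner.

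\textbf{Step 3 is not the obstacle you think it is.} You propose passing to a log resolution dominating $Y$ so that $E$ becomes snc and the $p^{-e}$-linear maps can be analyzed monomially. The paper avoids this entirely. It works directly on the DVR $R_v=\O_{Y,E}$ and uses the dictionary between maps $\phi\in\Hom_R(F^e_*R,R)$ and effective $\bQ$-divisors $\Delta_\phi$ (as in the Blickle--Schwede survey): the composition $\psi(F^e_*\blank)=\phi(F^e_*f^{\lceil\alpha p^e\rceil}\blank)$ corresponds to $\Delta'=\Delta_\phi+\tfrac{\lceil\alpha p^e\rceil}{p^e-1}\Div(f)$, and since $\coeff_E(K_Y-\pi^*\Delta')\le -1$, the restriction of $\psi$ to $R_v$ is a premultiple of the generator by at least $r^{p^e-1}$. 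This immediately gives $\psi(F^e_*\,\m_{v\ge 1})\subseteq\m_{v\ge 1}$ with no coordinate computation and no hypothesis on $Y$ beyond normality.

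\textbf{The bounding ideal is independent of $e$.} Your Step~4 builds $J_e=\idealb_m+\m^{[q^e]}$ and appeals to Hilbert--Kunz growth of $R/\idealb_m$. The paper does something stronger: from the claim above one deduces that the \emph{fixed} ideal $\m_{v\ge v(f)+1}\cap R$ is contained in $(\m^{[p^e]}:f^{\lceil\alpha p^e\rceil-1})$ for \emph{all} $e$, so the colength is bounded by a constant. (The trick is to write $g\cdot f^{\lceil\alpha p^e\rceil-1}=f^{\lceil\alpha p^e\rceil}\cdot(g/f)$ and note $g/f\in\m_{v\ge1}$ whenever $v(g)\ge v(f)+1$; this is exactly why the fraction-field extension of $\phi$ recalled before the theorem is relevant.) Your growth bound would suffice, but it is not what actually happens.

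\textbf{Squarefree-ness is irrelevant here.} You invoke \autoref{lem.heightSplittingPrime} and the radicality of $\langle f\rangle$ to force height $\ge 2$. The paper does not use this at all: since $E$ is \emph{exceptional} and $X$ is regular (hence normal), $\pi(E)$ automatically has codimension $\ge 2$. The paper simply localizes $R$ at the generic point of $\pi(E)$ (the $F$-signature can only go up), after which $\m=\pi(E)$ and $\dim R\ge 2$ is preserved. The parenthetical hypothesis ``$\langle f\rangle$ is radical'' in the statement is only there to explain when an $E$ with discrepancy $-1$ is guaranteed to exist, not for the proof.
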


\begin{proof}
Since the $F$-signature cannot decrease after localization \cite[Proposition 1.3]{AberbachLeuschke}, we can assume that $\frm = \pi(E)$ after localizing $R$ at the generic point of $\pi(E)$ (since $E$ is exceptional, we still have that $\dim(R) \geq 2$).  Let $v$ denote the divisorial valuation, with valuation ring $R_{v}$ and uniformizer $r$, on the fraction field $\mathbf{K}$ of $R$ corresponding to $E$, and for every positive number $\gamma$, let  $\m_{v \geq \gamma} = r^{\lceil \gamma \rceil} R_v$ consist of all fractions whose value is at least $\gamma$.  Note that as $\pi(E) = \m$, we have that $\m_{v \geq 1} \cap R \subseteq \m$.

The key technical point of this proof is the following claim.
\begin{claim}
 If $\alpha = \fpt(f) = \lct(f)$, then
\[ \phi \left( F^e_{\ast} (f^{\lceil \alpha p^e \rceil} \cdot \m_{v \geq 1}) \right) \subseteq \m_{v \geq 1} \text{ for every $e \geq 1$ and $\phi \in \operatorname{Hom}_R(F^e_{\ast} R, R)$.}
\]
\end{claim}
\begin{proof}[Proof of claim]
Consider such a $\phi$.  As in \cite[Section 4]{BlickleSchwedeSurveyPMinusE}, $\phi$ yields a $\bQ$-divisor $\Delta_{\phi} \geq 0$.  We consider the new $\bQ$-divisor $\Delta' = \Delta_{\phi} + {\lceil p^e \alpha \rceil \over p^e - 1}\Div(f)$.  Because the $E$ coefficient of
\[
K_Y - \alpha \cdot \Div_Y(f) = K_Y - \pi^*(\alpha \cdot \Div_X(f))
\]
equals -1, we see that $\beta := \coeff_E(K_Y - \pi^* \Delta')$ is $\leq -1$.  Define a new map on the fraction field $\psi(F^e_* \blank) := \phi(F^e_* f^{\lceil \alpha p^e \rceil} \blank)$.  Note that $\psi|_{F^e_* R}$ corresponds to the divisor $\Delta'$.  Hence, by \cite[Lemma 7.2.1]{BlickleSchwedeSurveyPMinusE}, $\psi|_{F^e_* R_v}$ corresponds to the divisor $-\beta \cdot \Div_{R_v}(r) \geq \Div_{R_v}(r)$.

Thus if $\Phi_v \in \Hom_{R_v}(F^e_* R_v, R_v)$ generates $\Hom_{R_v}(F^e_* R_v, R_v)$ as an $F^e_* R_v$-module, we can write $\psi_v(F^e_* \blank) = \Phi_v(F^e_* u r^{-\beta(p^e-1)} \blank)$ for some unit $u \in R_v$.  We then have that
\[
\begin{array}{rl}
& \phi \left( F^e_{\ast} (f^{\lceil \alpha p^e \rceil} \cdot \m_{v \geq 1}) \right) \\
=&  \psi\left( F^e_{\ast} \m_{v \geq 1} \right) \\
=& \Phi_v\left( F^e_{\ast} u r^{-\beta(p^e-1)} \m_{v \geq 1} \right)\\
 \subseteq & \Phi_v(F^e_* r^{p^e} R_v) \\
 \subseteq & r R_v\\
 = & \frm_{v \geq 1}.
\end{array}
\]
This proves the claim.
\end{proof}
As $\m_{v \geq 1} \cap R \subseteq \m$,  the above and \eqref{colonViaHom: e} then imply that
\[ (f^{\lceil \alpha p^e \rceil} \cdot \m_{v \geq 1} ) \cap R \subseteq \m^{[p^e]} \text {for every $e \geq 1$}.\]
Next, note that if $g \in R$ satisfies $v(g) \geq v(f) + 1$,  then $g/f \in \m_{v \geq 1}$, and thus
\[ g f^{\lceil \alpha p^e \rceil-1} = f^{\lceil \alpha p^e \rceil} \cdot g/f \in (f^{\lceil \alpha p^e \rceil} \cdot \m_{v \geq 1} ) \cap R \subseteq \m^{[p^e]} \text{ for every $e \geq 1$}. \]
This shows that $\m_{v \geq v(f) + 1} \cap R$ is contained in $( \m^{[p^e]} : f^{ \lceil \alpha  p^e \rceil-1})$, and therefore the length of $R/(\m^{[p^e]} : f^{\lceil \alpha p^e \rceil-1})$ is bounded above by the length of $R/( \m_{v \geq v(f) + 1} \cap R)$ for every $e \geq 1$.  The theorem then follows from \autoref{vanishingDerivativeRestatement: L} (here, it is important that $\dim R \geq 2$).
\end{proof}
%
%
%

We conclude by highlighting a few questions arising from our investigation.

\begin{question}
  If the left derivative of the $F$-signature function at the $F$-pure threshold vanishes, does this guarantee any ``nice'' behavior (e.g., from the point of view of any of the well-studied singularities defined via Frobenius)?
\end{question}

\begin{question}
  Do the higher (left) derivatives of the $F$-signature function exist at the $F$-pure threshold?  If so, how are they related to the approximations considered in \autoref{CantonRemark: R}.
\end{question}

\begin{question}
  Do all of the results of this section hold when the ambient ring is not regular?  Note \autoref{thm.FsigDerivativeNonzeroIfFPT=LCT} does.
\end{question}

\appendix\section{Another interesting example}
\begin{center}
  {by \textsc{Alessandro De Stefani, Jack Jeffries, Zhibek Kadyrsizova, Robert Walker, and George Whelan} }
\end{center}
\vskip 6pt
Consider the pair $(R, f^{\fpt(f)})$.  In the introduction, the authors discussed two conditions which seem to imply that this pair is ``well behaved.''
\begin{enumerate}
  \item{}  The characteristic does not divide the denominator of the $F$-pure threshold ($\fpt$).
  \item{}  The $F$-pure threshold ($\fpt$) and log canonical threshold ($\lct$) coincide.
\end{enumerate}
\cite[Example 4.5]{MustataTakagiWatanabeFThresholdsAndBernsteinSato} and the paper above show that these conditions are distinct.
It was also shown that both conditions imply certain behavior of the $F$-pure threshold.  It is thus natural to ask whether there are any other conditions that are implied by these.  If the characteristic does not divide the denominator of the $\fpt$, then the pair $(R, f^{\fpt(f)})$ is sharply $F$-pure and hence the corresponding test ideal $\tau(R, f^{\fpt(f)})$ is radical.  One might then hope that if the $F$-pure threshold and log canonical threshold coincide, then the test ideal is likewise radical.

The purpose of this appendix is to exhibit examples where $\fpt{} = \lct{}$ but the test ideal is not radical. In fact, we are able to produce a family of such examples, indexed by $n \in \NN$, where the the length of $R/\tau(R, f^{\fpt(f)})$ increases as $n$ increases. Our examples are inspired by the examples of Musta\c{t}\u{a} and Yoshida in \cite{MustataYoshidaTestIdealVsMultiplierIdeals}.

Let $n \gs 2$ be an integer, and let $N=2n+1$. Consider $R=\FF_2[x,y]$ and set \begin{equation}
  \label{eq.AppendixExample}
  f=x^2y^2+x^N+y^N \in R.
\end{equation}
By \cite{BlickleMustataSmithDiscretenessAndRationalityOfFThresholds}, since the characteristic is $2$, we have that $\tau(f^{1/2}) = \langle f \rangle^{[1/2]}$.  We can write $f=(xy)^2 \cdot 1 + (x^n)^2\cdot x + (y^n)^2 \cdot y$, therefore we obtain that $\tau = \langle f\rangle^{[1/2]} = \langle x^n,xy,y^n\rangle$.  In particular, note that the test ideal $\tau$ is not radical.  Furthermore, we see that the length $\ell_R(R/\tau) = 2n - 1 = N - 2$ and so it even has unbounded length.

\begin{proposition} \label{ex_lct=fpt} With $f$ as in \eqref{eq.AppendixExample}, $\fpt(f) = 1/2 = \lct(f)$.
\end{proposition}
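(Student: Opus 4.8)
The plan is to trap both invariants between two copies of $1/2$, by combining an elementary lower bound for $\fpt(f)$, an elementary upper bound for $\lct(f)$, and the general inequality $\fpt \le \lct$. Note first that in characteristic $2$ one has $\partial f/\partial x = N x^{N-1}$ and $\partial f/\partial y = N y^{N-1}$ with $N$ odd, so $V(f)$ has an isolated singularity at the origin $\frm = \langle x, y\rangle$; hence every invariant below may be computed after localizing at $\frm$. In particular we may invoke the combinatorial description of the $F$-pure threshold from \cite{BlickleMustataSmithDiscretenessAndRationalityOfFThresholds}: writing $\nu_e := \max\{r \ge 0 : f^r \notin \frm^{[2^e]}\}$, we have $\fpt(f) = \lim_{e \to \infty} \nu_e / 2^e$.

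First I would prove $\fpt(f) \ge 1/2$. Expanding the multinomial, $f^r = \sum_{a+b+c=r} \binom{r}{a,b,c}\, x^{2a+Nb}\, y^{2a+Nc}$, and I claim the monomial $x^{2r} y^{2r}$ occurs there with coefficient $1$. Indeed, producing $x^{2r}y^{2r}$ forces $2a + Nb = 2r = 2a + Nc$, hence $b = c$; together with $a + 2b = r$ this gives $(N-4) b = 0$, and since $n \ge 2$ forces $N = 2n+1 \ge 5 > 4$ we conclude $b = c = 0$, $a = r$, so the only contribution is $\binom{r}{r,0,0} = 1 \ne 0$ in $\FF_2$. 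Since $x^{2r}y^{2r} \notin \frm^{[2^e]} = \langle x^{2^e}, y^{2^e}\rangle$ precisely when $2r < 2^e$, taking $r = 2^{e-1} - 1$ gives $\nu_e \ge 2^{e-1} - 1$, whence $\fpt(f) \ge \lim_{e\to\infty}(2^{e-1}-1)/2^e = 1/2$.

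Next I would prove $\lct(f) \le 1/2$ by blowing up the origin. The lowest-degree form of $f$ is $x^2 y^2$ (the terms $x^N, y^N$ have degree $N \ge 5$), so $\mult_0(f) = 4$. Let $\pi \colon X \to \Spec R$ be the blowup of $\frm$, with exceptional divisor $E$ and strict transform $D$ of $\Div(f)$; then $K_\pi = E$ and $\pi^*\Div(f) = D + 4E$, so the coefficient of $E$ in $\lambda\,\pi^*\Div(f) - K_\pi$ is $4\lambda - 1$. Log canonicity of $(\Spec R, \lambda\Div(f))$ requires this coefficient to be at most $1$, forcing $\lambda \le 1/2$; hence $\lct(f) \le 1/2$.

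Finally, the inequality $\fpt(f) \le \lct(f)$ — valid for any element of an $F$-finite regular local ring of characteristic $p>0$, since sharp $F$-purity implies log canonicity \cite{HaraWatanabeFRegFPure} — yields $1/2 \le \fpt(f) \le \lct(f) \le 1/2$, so $\fpt(f) = \lct(f) = 1/2$. (One could instead get $\fpt(f) \le 1/2$ directly from the computation $\tau(R, f^{1/2}) = \langle x^n, xy, y^n\rangle \subsetneq R$ recorded above, via monotonicity of test ideals.) I expect the only step needing genuine care is the non-cancellation claim in the second paragraph, namely that $x^{2r}y^{2r}$ survives in $f^r$ with coefficient exactly $1$; this is precisely where the hypothesis $n \ge 2$ (equivalently $N > \deg(x^2 y^2) = 4$) enters, and it is the same inequality that makes $\mult_0(f) = 4$ in the third paragraph. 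The blowup computation and the invocation of $\fpt \le \lct$ are routine.
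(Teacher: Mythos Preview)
Your proof is correct and follows essentially the same approach as the paper: lower-bound $\fpt$ by exhibiting the surviving monomial $(xy)^{2r}$ in $f^r$, upper-bound $\lct$ via the blowup at the origin (where $\mult_0(f)=4$ and $K_\pi=E$), and sandwich using $\fpt \le \lct$. Your non-cancellation argument, which solves explicitly for all multinomial triples $(a,b,c)$ contributing to $x^{2r}y^{2r}$, is slightly more detailed than the paper's (which simply notes that $(xy)^{2r}$ is the unique term of minimal degree in the expansion and hence cannot cancel), but the content is identical.
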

\begin{proof} Given that $\tau(f^{1/2}) = \langle x^n,xy,y^n \rangle \ne R$, we see that $\fpt(f) \leq 1/2$.  To show the other inequality, we prove that $(R, f^{1/2})$ is $F$-pure\footnote{Following \cite{HaraWatanabeFRegFPure}, this just means that $f^{\lfloor 1/2(p^e-1) \rfloor} \notin \m^{[2^e]}$ for $e \gg 0$.} or equivalently that $(R, f^{1/2 - \varepsilon})$ is sharply $F$-pure for $\varepsilon > 0$.  Indeed,
  \[
    \displaystyle \fpt(f) = \sup \{r \in \RR_{\geq 0} \mid \tau(f^r) = R\} = \sup\{t \in \RR_{\geq  0} \mid (R,f^t) \mbox{ is } F\mbox{-pure}\}.
  \]
  For all integers $e \geq 1$, we have that $\lfloor \frac{2^e-1}{2} \rfloor = 2^{e-1}-1$. It is easy to see that in the expansion of $f^{\lfloor \frac{1}{2} (2^e-1)\rfloor } =  f^{2^{e-1}-1}$, the term $(x^2y^2)^{2^{e-1}-1} = (xy)^{2^e - 2}$ has the smallest possible degree of any term.  Hence $(xy)^{2^e - 2}$ does not get canceled and it appears in the expansion of $f^{\lfloor \frac{1}{2}(2^e-1)\rfloor }$ with non-zero coefficient.  As $(xy)^{2^e-2} \notin \m^{[2^e]}$, we conclude that $f^{\lfloor \frac{1}{2}(2^e-1) \rfloor} \notin \m^{[2^e]}$. Therefore the pair $(R,f^{1/2})$ is $F$-pure, as claimed.  This shows that $\fpt(f) \geq 1/2$.

  Now we turn our attention to $\lct(f)$, the log canonical threshold.  Since $\lct(f) \geq \fpt(f) = 1/2$, it suffices to show that $1/2 \geq \lct(f)$.  To this end, blow up the origin to obtain $\pi : Y \to \mathbb{A}^2 = \Spec R$.  Note that the relative canonical divisor is simply one copy of the exceptional divisor $K_{Y / \mathbb{A}^2} = E$.  In the chart $\Spec \FF_2[\frac{y}{x},x] = \Spec \FF_2[u,x]$, we have that the pullback of $f$ is
  \[
    \ds x^4(u^2+x^{N-4}+u^Nx^{N-4}).
  \]
  By symmetry, we see that $\pi^* \Div(f) = 4E + \widetilde{H}$ where $\widetilde{H}$ is the strict transform of $\Div(f)$.  Note that $\widetilde{H}$ is defined by $u^2+x^{N-4}+u^Nx^{N-4}$ in the chart we wrote down.
  In order for $(R, f^{t})$ to be log canonical, we must have $\coeff_E(K_{Y / \mathbb{A}^2} - t \pi^* \Div(f)) \geq -1$.  By our previous computation, this is the same as requiring that
  \[
    1 - 4t \geq -1
  \]
  or in other words that $t \leq 1/2$.  It follows that
  \[
    \lct(f) = \sup\{t \in \bR_{\geq 0}\;|\; (R, f^t) \text{ is log canonical.}\} \leq 1/2.
  \]
  As discussed above, this completes the proof.
\end{proof}

In conclusion:

\begin{corollary}
There exist examples $f \in R$ where $\fpt(f) = \lct(f)$, and hence the derivative of the $F$-signature is zero by \autoref{thm.FsigDerivativeNonzeroIfFPT=LCT} but where $\tau(R, f^{\fpt(f)})$ is not radical.
\end{corollary}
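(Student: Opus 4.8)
The plan is simply to assemble the pieces already in place. Fix an integer $n \geq 2$, set $N = 2n+1$, and take $f = x^2y^2 + x^N + y^N \in R = \mathbb{F}_2[x,y]$ as in \eqref{eq.AppendixExample}; after localizing at the maximal ideal $\langle x, y\rangle$ we may regard $R$ as an $F$-finite regular local ring of dimension two, noting that neither $\tau(R, f^{\fpt(f)})$ nor the local behavior of $s(R, f^t)$ near $\fpt(f)$ is affected by this localization. By \autoref{ex_lct=fpt} we have $\fpt(f) = \lct(f) = 1/2 < 1$.

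Next I would verify the hypotheses of \autoref{thm.FsigDerivativeNonzeroIfFPT=LCT} via the parenthetical sufficient condition stated there: a log resolution of $(\Spec R, \Div(f))$ exists since $R$ is a regular ring of dimension two, we have just recorded $\fpt(f) = \lct(f) < 1$, and it remains only to see that $\langle f\rangle$ is radical, i.e., that $f$ is square-free. This is quick in characteristic two: since $N$ is odd, $\partial f/\partial x = N x^{N-1} = x^{N-1}$ and $\partial f/\partial y = y^{N-1}$, and for any factorization $f = g^2 h$ one gets $g^2 \mid \partial f/\partial x$ and $g^2 \mid \partial f/\partial y$ (the derivative of $g^2$ vanishes), hence $g^2 \mid \gcd(x^{N-1}, y^{N-1}) = 1$, so $g$ is a unit. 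With these three conditions checked, \autoref{thm.FsigDerivativeNonzeroIfFPT=LCT} gives that the left derivative of $s(R, f^t)$ at $t = \fpt(f)$ is zero.

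Finally, the test ideal has already been computed in the discussion preceding \autoref{ex_lct=fpt}: writing $f = (xy)^2 \cdot 1 + (x^n)^2 \cdot x + (y^n)^2 \cdot y$ and using that $\tau(f^{1/2}) = \langle f\rangle^{[1/2]}$ in characteristic two \cite{BlickleMustataSmithDiscretenessAndRationalityOfFThresholds}, one obtains $\tau(R, f^{\fpt(f)}) = \langle x^n, xy, y^n\rangle$. For $n \geq 2$ a monomial basis of $R/\langle x^n, xy, y^n\rangle$ is $1, x, \dots, x^{n-1}, y, \dots, y^{n-1}$, so this ideal contains $x^n$ but not $x$, and is therefore not radical; moreover $\lambda_R(R/\tau) = 2n-1 = N-2$, which is unbounded as $n \to \infty$. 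Thus each such $f$ furnishes an example with $\fpt(f) = \lct(f)$, hence (by the theorem) with vanishing left derivative of the $F$-signature at $\fpt(f)$, but with non-radical test ideal. There is no genuine obstacle here beyond collecting the already-established facts; the one point worth stating carefully is the square-freeness of $f$, which is exactly what licenses the application of \autoref{thm.FsigDerivativeNonzeroIfFPT=LCT}.
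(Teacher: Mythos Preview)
Your proof is correct and follows essentially the same ``assemble the pieces'' approach as the paper, which states the corollary as an immediate summary without a separate proof. The only notable difference is how you verify the hypotheses of \autoref{thm.FsigDerivativeNonzeroIfFPT=LCT}: you invoke the parenthetical sufficient condition and therefore supply a square-freeness argument for $f$, whereas the blowup computation already carried out in the proof of \autoref{ex_lct=fpt} exhibits the exceptional divisor $E$ directly with $E$-coefficient of $K_Y - \tfrac{1}{2}\,\pi^*\Div(f)$ equal to $1 - 4\cdot\tfrac{1}{2} = -1$, so the main hypothesis of the theorem is available without the detour through square-freeness. Either route works; yours is slightly more self-contained, while the paper's implicit route reuses the explicit discrepancy computation.
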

\vskip 10pt
\noindent
{\it Acknowledgements:}  The authors of this appendix worked this example out at the Mathematics Research Community (MRC) in Commutative Algebra.  The authors would like to thank the staff and organizers of this MRC for the support provided.  The authors would also like to thank Eric Canton and Karl Schwede for useful conversations.


\bibliographystyle{skalpha}
\bibliography{MainBib}
\end{document}